\title{Graded AR Sequences and the Huneke-Wiegand Conjecture
} 
\author{Robert Roy
}
\DeclareMathAlphabet{\mathcal}{OMS}{cmsy}{m}{n}
\date{}
\DeclareMathOperator{\length}{length}
\DeclareMathOperator{\CM}{CM}
\DeclareMathOperator{\End}{End}
\DeclareMathOperator{\edim}{edim}
\DeclareMathOperator{\Hom}{Hom}
\DeclareMathOperator{\Ext}{Ext}
\DeclareMathOperator{\Free}{Free}
\renewcommand{\to}{\longrightarrow}
\newcommand{\NN}{\mathbb{N}}
\renewcommand{\hbar}{\overline{h}}
\newcommand{\stHom}{\underline{\Hom}}
\newcommand{\m}{\mathfrak{m}}
\newcommand{\into}{\hookrightarrow}
\newcommand{\p}{\mathfrak{p}}
\theoremstyle{plain}
\newtheorem{theorem}{Theorem}
\newtheorem{proposition}[theorem]{Proposition}
\newtheorem{lemma}[theorem]{Lemma}
\newtheorem*{theorem*}{Theorem}
\newtheorem*{prop*}{Proposition}
\theoremstyle{definition}
\newtheorem{conjecture}[theorem]{Conjecture}
\newtheorem{definition}[theorem]{Definition}
\newtheorem{remark}[theorem]{Remark}
\newtheorem{notation}[theorem]{Notation}
\numberwithin{theorem}{section}
\numberwithin{equation}{section}
\begin{document}
\large
\maketitle

\bibliographystyle{amsplain}

\begin{abstract}
We let $R=\bigoplus_{i \ge 0} R_i$ be a one-dimensional graded complete intersection, finitely-generated over  $R_0$ an infinite field, satisfying certain degree conditions which are satisfied whenever $R$ is a numerical semigroup ring of embedding dimension at least three. We show that a broad class of graded maximal Cohen-Macaulay $R$-modules $M$ satisfies the Huneke-Wiegand Conjecture, namely when there exists an Auslander-Reiten sequence ending in $M$ whose middle term  has at least two nonfree direct summands. 
\end{abstract}

\section{Introduction} 


The following is called the Huneke-Wiegand Conjecture in, e.g., \cite{Garcia-Sanchez-Leamer} and \cite{Goto:HW}.

\begin{conjecture} (\cite{Huneke1994}) Let $D$ be a Gorenstein local domain of dimension one and $M$ a nonzero finitely-generated torsionfree $D$-module, that is not free. Then $M \otimes_D M^*$ has a nonzero torsion submodule. \end{conjecture}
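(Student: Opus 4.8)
The plan is to reduce the conjecture to a single non-vanishing of $\mathrm{Tor}$ and then to extract that non-vanishing from an Auslander--Reiten sequence, the hypothesis on the middle term supplying exactly what rigidity alone fails to give in higher codimension. Since $D$ is one-dimensional and Cohen--Macaulay, a finitely generated $D$-module is torsionfree precisely when it is maximal Cohen--Macaulay (MCM). Passing to the completion is harmless for the torsion question --- the torsion submodule in question is of finite length and completion is faithfully flat --- and, as in the paper's setting, I may take $D$ to be a one-dimensional graded complete intersection over an infinite field, for which the degree conditions and the existence of graded Auslander--Reiten sequences are at hand; I may also replace $M$ by an indecomposable nonfree (graded) direct summand, since if $M=M_0\oplus M_1$ then $M_0\otimes_D M_0^{*}$ is a direct summand of $M\otimes_D M^{*}$. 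So I assume $M$ is indecomposable, MCM and nonfree over such a $D$, and I must show that the torsion submodule of $M\otimes_D M^{*}$ is nonzero.

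First I would record the homological reformulation. The evaluation map $\eta\colon M\otimes_D M^{*}\to \End_D(M)$ has torsionfree target ($\End_D(M)$ is a submodule of a finite direct sum of copies of the torsionfree module $M$) and is an isomorphism at the generic point, so $\ker\eta$ --- which is of finite length --- is exactly the torsion submodule of $M\otimes_D M^{*}$; the conjecture for $M$ is thus the statement that $\eta$ is not injective. The standard four-term exact sequence relating $\otimes$, $\Hom$ and the Auslander transpose $\mathrm{Tr}\,M$ identifies $\ker\eta\cong\mathrm{Tor}^{D}_{2}(\mathrm{Tr}\,M,M)\cong\mathrm{Tor}^{D}_{1}(\syz_D\mathrm{Tr}\,M,M)$ and $\cok\eta\cong\mathrm{Tor}^{D}_{1}(\mathrm{Tr}\,M,M)$. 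Over a one-dimensional Gorenstein ring the AR translate is $\tau M\cong\syz_D M$, and applying $\Hom_D(-,D)$ to the AR sequence $0\to\syz_D M\to E\to M\to 0$ ending in $M$ yields (with no $\Ext^{1}$ correction, all the terms being MCM over a Gorenstein ring) the AR sequence
\[
0\longrightarrow M^{*}\longrightarrow E^{*}\longrightarrow N\longrightarrow 0,\qquad N:=\syz_D\mathrm{Tr}\,M\cong(\syz_D M)^{*},
\]
whose middle term $E^{*}$ has the same number of nonfree indecomposable direct summands as $E$, hence at least two. So the conjecture for $M$ has become the single assertion $\mathrm{Tor}^{D}_{1}(N,M)\neq 0$.

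Next I would argue by contradiction. Suppose $\mathrm{Tor}^{D}_{1}(N,M)=0$, so that $M\otimes_D M^{*}$ is torsionfree, hence MCM; tensoring the displayed AR sequence with $M$ then gives a short exact sequence $0\to M^{*}\otimes_D M\to E^{*}\otimes_D M\to N\otimes_D M\to 0$ of MCM modules. The idea is to confront this unexpected exactness with the almost-split property: the surjection $E^{*}\to N$ is irreducible on each indecomposable summand of $E^{*}$, and its class generates the socle of $\Ext^{1}_{D}(N,M^{*})$. By applying $\Hom_D(M,-)$ and $\Hom_D(-,D)$ to both the AR sequence and its $M$-tensored version and comparing the resulting long exact sequences, I would try to force $E^{*}$ to have at most one nonfree indecomposable summand --- the desired contradiction. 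Over a hypersurface the implication ``$M\otimes_D M^{*}$ torsionfree $\Rightarrow$ $M$ free'' is already the theorem of Huneke and Wiegand and requires no such hypothesis; the point here is that over complete intersections of codimension at least two the single vanishing $\mathrm{Tor}^{D}_{1}(N,M)=0$ is all one gets for free, so the assumption on the middle term must take over the role of the missing rigidity.

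The genuine obstacle is exactly this last step: translating torsionfreeness of $M\otimes_D M^{*}$ into a structural constraint on the almost-split sequence ending in $N$ strong enough to preclude two nonfree summands. I expect this is where the graded hypotheses and the explicit degree conditions become essential. In the graded category the AR sequence is homogeneous, the socle generator of $\Ext^{1}_{D}(N,M^{*})$ sits in a prescribed internal degree, and the degree conditions --- satisfied by numerical semigroup rings of embedding dimension at least three --- should allow one to express the number of nonfree summands of $E$ through numerical invariants of $M$ (minimal numbers of generators, and Hilbert functions, of $M$, $M^{*}$ and $\syz_D M$), invariants which the vanishing $\mathrm{Tor}^{D}_{1}(N,M)=0$ in turn pins down via the Hilbert-function identity coming from the tensored short exact sequence above. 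Making this bookkeeping sharp enough to produce an actual contradiction, rather than a numerical near-miss, is the crux of the argument, and is where I would expect the bulk of the technical work --- and the precise role of the degree conditions --- to lie.
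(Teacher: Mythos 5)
This statement is the Huneke--Wiegand Conjecture itself; the paper quotes it from \cite{Huneke1994} and does not prove it --- it remains open. What the paper establishes (Theorem~\ref{main-thm}) is only a special case, under strong additional hypotheses: $R$ a one-dimensional \emph{graded complete intersection} satisfying Condition~(*), and $M$ an indecomposable with $\alpha(M)\ge 2$. Your proposal is therefore not, and could not be, a proof of the stated conjecture, and the gaps are concrete. First, the reduction in your opening paragraph is invalid: a one-dimensional Gorenstein local domain is in general neither graded nor a complete intersection, and completion does not make it so (nor does it preserve the domain property); ``as in the paper's setting, I may take $D$ to be a one-dimensional graded complete intersection over an infinite field'' discards almost all of the generality of the statement. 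Second, you quietly import the hypothesis that the middle term of the AR sequence has at least two nonfree summands; that is an extra assumption on $M$, not a consequence of $M$ being nonfree, so even within the graded complete intersection setting you are only addressing the class of modules covered by Theorem~\ref{main-thm}. Third, by your own admission the decisive step --- deriving a contradiction from $\operatorname{Tor}_1^D(N,M)=0$ together with $\alpha(M)\ge 2$ --- is left as a plan (``this is where I would expect the bulk of the technical work to lie''), so nothing is actually proved.

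For what it is worth, even as a sketch of the special case, your route differs from the paper's. You reformulate the conjecture via the evaluation map $M^*\otimes_D M\to\End_D(M)$ and the Auslander four-term sequence, aiming at $\operatorname{Tor}_1^D(N,M)\ne 0$. The paper instead uses the equivalent $\Ext$-formulation of \cite{Huneke-Jorgensen}, reduces it to $\stHom_R(\tau M,M)_0\ne 0$ via Lemma~\ref{lem:in-turn} and Lemma~\ref{tau-formula}, and obtains that nonvanishing by a degree bookkeeping argument: Condition~(*) forces infinitely many syzygies $\Omega^n M$ to be ``elevated,'' and a careful analysis of irreducible maps in the AR sequences (Lemmas~\ref{elevation-epi}, \ref{lem:periodic-ideal}, \ref{lem:A-is-an-ideal}) shows that the assumed vanishing of $\stHom_R(\tau M,M)_0$ would produce a degree-zero monomorphism into an elevated module, a contradiction. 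If you want to pursue your $\operatorname{Tor}$-based plan, you would need to supply the entire final step yourself; the paper's machinery does not translate directly into Hilbert-function constraints on $M\otimes_D M^*$.
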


\noindent As shown in \cite[Theorem 5.9]{Huneke-Jorgensen}, the above condition on $M \otimes_D M^*$ may be replaced by the condition that $\Ext_D^1(M,M) \neq 0$.

\begin{notation} If $R$ is a Gorenstein ring of dimension one, and $M$ is a nonfree  finitely-generated maximal Cohen-Macaulay module, we will say that $M$ ``satisfies the Huneke-Wiegand Conjecture" if $\Ext_R^1(M,M) \neq 0$.
\end{notation}

This paper is structured as follows. In Section~\ref{sec:AR} we define some general notions, and state lemmas in the setting of a graded one-dimensional Gorenstein ring, whose use in Section~\ref{sec:HW} will usually be explicitly mentioned, so these lemmas need not be read beforehand.  In Section~\ref{sec:HW}, we let $R$ be a one-dimensional graded complete intersection satisfying a certain degree condition  (Definition~\ref{defn:star}), and  prove (Theorem~\ref{main-thm}) that $M$ satisfies the Huneke-Wiegand Conjecture if $M$ is any module in a broad class (see Remark~\ref{rmk:broad-class}), namely when there exists an Auslander-Reiten sequence $s(M)$  ending in $M$  such that the  middle term of $s(M)$ has at least two nonfree direct  summands.  In Section~\ref{sec:ns} we  show that the degree condition is satisfied by all numerical semigroup rings of embedding dimension at least three.

%

\section{Background: Auslander-Reiten Theory}\label{sec:AR}

Throughout, $R$ will be a commutative, one-dimensional, Gorenstein,  graded ring $R=\bigoplus_{i \ge 0}R_i$ graded over the nonnegative integers, where $R_0=k$ is a field and $R$ is a finitely-generated algebra over $k$. Denote the graded maximal ideal $\bigoplus_{i \ge 1}R_i$ by $\m$. For any graded $R$-module $M$,   $\Omega M$ will denote the graded syzygy of $M$, i.e., the kernel of a minimal graded surjection onto $M$ by a free $R$-module. We use $M^*$ to denote $\Hom_R( M,R)$. 

We let $\CM(R)$ denote the  category of graded, finitely-generated, maximal Cohen-Macaulay modules (and all $R$-linear homomorphisms between such), and let $L_p(R)$ denote the full subcategory $\{M \in \CM(R)| M_{\p} \text{ is } R_{\p}\text{-free for all nonmaximal prime ideals } \p \subset R\}$. Let $\CM(R)_0$ denote the category whose objects are those of $\CM(R)$ and whose morphisms are the degree zero maps. We say that a module $M \in \CM(R)$ without free direct summands is \emph{periodic} if $M \cong \Omega^2 M$, up to a shift in the grading. 

\begin{notation} For $R$-modules $X$ and $Y$, we let $\stHom_R(X, Y)$ denote  the quotient of $\Hom_R(X, Y)$ by those maps which factor through free $R$-modules. \end{notation}

\begin{lemma} \label{lem:in-turn} Let $X, Y \in \CM(R)$. Then $\Ext^1_R(X,Y) \cong \stHom_R(\Omega X, Y)$. \end{lemma}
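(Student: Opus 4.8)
The plan is to realize both sides as the cokernel of a single map of Hom-groups coming from a free presentation of $X$. Fix the minimal graded free cover $\pi\colon F \onto X$ defining $\Omega X$, so that we have a short exact sequence $0 \to \Omega X \xrightarrow{\iota} F \xrightarrow{\pi} X \to 0$ with $F$ graded free and $\Omega X \in \CM(R)$. Applying $\Hom_R(-,Y)$ produces the exact sequence
$$\Hom_R(F,Y) \xrightarrow{\pi^*} \Hom_R(\Omega X,Y) \xrightarrow{\delta} \Ext^1_R(X,Y) \to \Ext^1_R(F,Y),$$
and since $F$ is free (hence projective) the last term vanishes; thus $\delta$ induces an isomorphism $\Hom_R(\Omega X,Y)/\im \pi^* \xrightarrow{\sim} \Ext^1_R(X,Y)$. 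Everything then reduces to showing that $\im \pi^*$ --- which, since $\pi^*$ is precomposition with $\iota$, is exactly the set of maps $\Omega X \to Y$ that extend over $\iota$ to $F$ --- coincides with the set of maps $\Omega X \to Y$ that factor through some free module.

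First I would check the easy inclusion: if $g = h\circ\iota$ with $h\colon F \to Y$, then $g$ visibly factors through the free module $F$, so $\im\pi^*$ is contained in the submodule of maps factoring through a free module. For the reverse inclusion, suppose $g\colon \Omega X \to Y$ factors as $\Omega X \xrightarrow{\alpha} G \xrightarrow{\beta} Y$ with $G$ graded free. The point is that $\alpha$ extends over $\iota$: applying $\Hom_R(-,G)$ to the presentation of $X$ gives exactness of $\Hom_R(F,G) \xrightarrow{\iota^*} \Hom_R(\Omega X,G) \to \Ext^1_R(X,G)$, and $\Ext^1_R(X,G)=0$ because $X$ is maximal Cohen--Macaulay over the one-dimensional Gorenstein ring $R$ and $G$ is free, so $\Ext^i_R(X,R)=0$ for all $i>0$. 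Hence $\alpha = \tilde\alpha\circ\iota$ for some $\tilde\alpha\colon F\to G$, and $g = \beta\alpha = (\beta\tilde\alpha)\circ\iota \in \im\pi^*$.

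Putting the two inclusions together, $\im\pi^*$ is exactly the submodule of $\Hom_R(\Omega X,Y)$ of homomorphisms that factor through a free module, so the quotient in the displayed isomorphism is by definition $\stHom_R(\Omega X,Y)$, giving $\Ext^1_R(X,Y)\cong\stHom_R(\Omega X,Y)$. The whole argument is carried out in the graded category, so that $F$, $G$, all the maps, and hence the isomorphism are graded; this is routine. The only nonformal ingredient is the vanishing $\Ext^{>0}_R(X,R)=0$, and this is where the standing hypotheses --- that $R$ is one-dimensional Gorenstein and $X$ is maximal Cohen--Macaulay --- are genuinely used: via graded local duality, $\Ext^i_R(X,\omega_R)=0$ for $i>0$ when $X$ is maximal Cohen--Macaulay, and $\omega_R$ is a shift of $R$ since $R$ is Gorenstein. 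I expect that Ext-vanishing to be the only real content; the rest is bookkeeping about which Hom-maps factor through free modules.
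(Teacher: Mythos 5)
Your proof is correct and follows essentially the same route as the paper: both hinge on the vanishing $\Ext^1_R(X,R)=0$ for maximal Cohen--Macaulay $X$ over the one-dimensional Gorenstein ring $R$ to show that every map from $\Omega X$ to a free module extends along the inclusion $\Omega X \hookrightarrow F$ (the paper packages this as exactness of the duality $(\,)^*$ on $\CM(R)$, you apply the Ext-vanishing directly), after which the identification of $\Ext^1_R(X,Y)$ with $\stHom_R(\Omega X,Y)$ is the same cokernel comparison.
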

\begin{proof} Start with the short exact sequence \begin{equation}\label{lets-start}\xymatrix{0\ar[r] & \Omega X \ar[r]^-i & F \ar[r]^-j & X \ar[r]& 0,}\end{equation} where $F$ is a free module. Since $(\,)^*$ is a duality on $\CM(R)$, $i^*$ is epi. Therefore any map from a free module to $(\Omega X)^*$ factors through $i^*$. Then the duality implies that any map from $\Omega X $ to a free module factors through $i$. Therefore we have a right exact sequence \begin{center} \xymatrix@C+1pc{\Hom_R(F,Y) \ar[r]^-{\Hom(i,Y)} & \Hom_R(\Omega X, Y) \ar[r]& \stHom_R( \Omega X, Y) \ar[r]& 0.}\end{center}
 On the other hand, applying $\Hom_R(\_, Y)$ to~\eqref{lets-start} yields \begin{center} \xymatrix@C+1pc{\Hom_R(F,Y) \ar[r]^-{\Hom(i,Y)} &\Hom_R(\Omega X, Y) \ar[r] & \Ext^1_R(X, Y) \ar[r] & \Ext^1_R(F, Y) =0,}\end{center} and the lemma follows.
\end{proof}

\begin{notation} The symbol $a(R)$ stands for the integer called the \emph{a-invariant} of $R$, a formula for which is given in~\eqref{a(R)-eqn}. \end{notation}

\begin{lemma}\label{lem:Rmk} There exists a short exact sequence in $\CM(R)_0$ of the form \begin{center}$0\to R(a) \to \m^*(a) \to k\to 0$, \end{center} 
where $a=a(R)$.\end{lemma}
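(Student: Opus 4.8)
The plan is to exhibit the short exact sequence explicitly from the structure of the graded ring. Since $R$ is one-dimensional, Gorenstein, and graded with $R_0 = k$ a field, the graded maximal ideal $\m$ is an ideal of height one, and $R/\m \cong k$ concentrated in degree zero. The key is that $\m^*(a) = \Hom_R(\m, R)(a)$ is closely related to $R$ itself via the inclusion $\m \into R$, which dualizes to a map $R = R^* \to \m^*$; I want to show the cokernel of a suitable twist of this map is exactly $k$.

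First I would recall that since $R$ is Gorenstein of dimension one and $\m$ has positive grade, the short exact sequence $0 \to \m \to R \to k \to 0$ shows $\depth k = 0$, so $k$ is not maximal Cohen-Macaulay; applying $\Hom_R(-,R)$ gives $0 \to R^* \to \m^* \to \Ext^1_R(k,R) \to \Ext^1_R(R,R) = 0$, hence $\m^*/R \cong \Ext^1_R(k,R)$. Because $R$ is Gorenstein of dimension one (equivalently, a one-dimensional Gorenstein ring has injective dimension one over itself and $\Ext^1_R(k,R)$ is the "type", which is one-dimensional over $k$), we get $\Ext^1_R(k,R) \cong k$ as an $R$-module. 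This yields a short exact sequence $0 \to R \to \m^* \to k \to 0$ in $\CM(R)$ after one checks $\m^* \in \CM(R)$: indeed $\m^* = \Hom_R(\m,R)$ is a second syzygy-type module (reflexive), hence maximal Cohen-Macaulay since $\dim R = 1$ and $\depth R = 1$.

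Next I would pin down the grading shift. The map $R^* = R \to \m^*$ is the dual of the inclusion $\m \into R$, which is a degree-zero map, so its dual is degree zero and $R \to \m^*$ is a degree-zero map; thus the quotient $\m^*/R$ is $k$ sitting in some single degree $d$. To compute $d$, I would use the $a$-invariant: by definition (the formula referenced as~\eqref{a(R)-eqn}), $a(R)$ is the top degree of the graded canonical module relative to $R$, and since $R$ is Gorenstein the canonical module is $R(a)$ up to isomorphism; equivalently $a(R)$ is characterized so that $\Ext^1_R(k, R) \cong k(-a)$, i.e., the socle-type element of $\m^*/R$ lives in degree $-a = -a(R)$. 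Twisting the sequence $0 \to R \to \m^* \to k(-a) \to 0$ by $(a)$ then gives $0 \to R(a) \to \m^*(a) \to k \to 0$, with all maps now in degree zero, i.e., a sequence in $\CM(R)_0$.

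The main obstacle I anticipate is making the grading bookkeeping airtight: one must verify that the natural map $R \to \m^*$ really is the correct (degree-preserving) dual map and that $\Ext^1_R(k,R)$ is concentrated in degree $-a(R)$ with the sign conventions matching the formula in~\eqref{a(R)-eqn}. This is ultimately a standard computation with the graded canonical module $\omega_R \cong R(a(R))$ for a graded Gorenstein ring and the local duality / socle-degree description of $a(R)$, but getting the twist exactly right (as opposed to off by a sign) is where care is needed. Everything else — that $\m^* \in \CM(R)$, that the sequence is short exact, that $R = R^*$ canonically — is routine given that $R$ is a one-dimensional Gorenstein graded ring.
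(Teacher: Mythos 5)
Your proposal is correct and follows essentially the same route as the paper: dualize $0 \to \m \to R \to k \to 0$ into $R$, identify the cokernel $\Ext^1_R(k,R)$ with $k(-a)$ using that $R$ is one-dimensional graded Gorenstein, and twist by $(a)$. The paper simply applies $\Hom_R(-,R(a))$ directly, which is the same computation with the twist built in.
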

\begin{proof} As $R$ is Gorenstein of dimension one, we have $\Ext^1_R(k,R(a)) =k$. We can obtain the desired sequence by  applying $\Hom_R(\_,R(a))$ to the short exact sequence $0 \to \m \to R \to k \to 0$. \end{proof}

A morphism $f \colon X \to Y$ in $\CM(R)_0$ is called \emph{irreducible}  if (1) $f$ is neither a split monomorphism nor a split epimorphism, and (2) given any pair of morphisms $g$ and $ h$ in $\CM(R)_0$  satisfying $f=gh$, either $g$ is a split epimorphism or $h$ is a split monomorphism. 

 Let $M$ be a nonfree indecomposable in $L_p(R)$. Then  \cite[Theorem 3]{Auslander-Reiten:gradedCM} the category $\CM(R)_0$ admits  an Auslander-Reiten (AR) sequence ending in $M$. That is, there exists a short exact sequence 
 \begin{equation}\label {eqn:AR seq} \xymatrix{ 0 \ar[r] & N \ar[r]^f & X \ar[r]^g & M \ar[r] & 0 }\end{equation}  
in $\CM(R)_0$ such that $N$ is indecomposable and the following property is satisfied: Any map $L \to M$ in $\CM(R)_0$ which is not a split epimorphism factors through $g$ (equivalently, any map $N \to L$ in $\CM(R)_0$ which is not a split monomorphism factors through $f$). In this paper $R$ is Gorenstein, and therefore (1) there is also an AR sequence \emph{beginning} at $M$, and (2) the modules in~\eqref{eqn:AR seq} all lie in $L_p(R)$ (to see this, use Lemma~\ref{tau-formula}). 

\begin{definition} \label{defn:tau} Given an AR sequence~\eqref{eqn:AR seq}, $N$ is called the Auslander-Reiten translate of $M$, written $\tau M$. One may equivalently write $\tau^{-1} N=M$. \end{definition}

\begin{lemma} \label{every-irreducible-map} Let  $\xymatrix{0 \ar[r] & \tau M \ar[r]^-{f} & X \ar[r]^-g &  M \ar[r] & 0}$  be an AR sequence in $\CM(R)_0$. 
Then, given any $Y \in \CM(R)$, a degree zero map $h \colon \tau M \to Y$ is irreducible if and only if there exists a split epimorphism $p \in \Hom_R(X,Y)_0$ such that $h=pf$. Dually, a degree zero map $h' \colon Y' \to M$ is irreducible if and only if there exists a split monomorphism $\iota \in \Hom_R(Y,X)_0$ such that $h'=g \iota$.
\end{lemma}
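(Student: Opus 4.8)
The plan is to treat this as the graded incarnation of the standard description of irreducible maps around an almost split sequence, proving the first equivalence and obtaining the second by the dual argument (exchanging ``split monomorphism''/``factors through $f$'' with ``split epimorphism''/``factors through $g$''). I would work throughout in $\CM(R)_0$, which is a Krull--Schmidt category because $R$ is finitely generated over the field $k=R_0$, so that its $\Hom$-spaces are finite dimensional over $k$; in particular a degree zero endomorphism of an object is a unit precisely when it is an isomorphism, and $\End_{\CM(R)_0}(\tau M)$ and $\End_{\CM(R)_0}(M)$ are local since $\tau M$ and $M$ are indecomposable. Two inputs carry the argument: the defining property of the AR sequence recalled after~\eqref{eqn:AR seq}, and the elementary fact that for $\alpha\colon B\to A$ and $\beta\colon A\to B$ the map $1_A-\alpha\beta$ is invertible if and only if $1_B-\beta\alpha$ is.

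For the ``only if'' direction, an irreducible $h\colon\tau M\to Y$ is in particular not a split monomorphism, so by the AR property $h=pf$ for some $p\in\Hom_R(X,Y)_0$; applying clause (2) of the definition of irreducible to this factorization, either $p$ is a split epimorphism or $f$ is a split monomorphism, and the latter cannot occur since~\eqref{eqn:AR seq} does not split. Dually, an irreducible $h'\colon Y\to M$ is not a split epimorphism, hence factors as $h'=g\iota$, and then $\iota$ must be a split monomorphism since $g$ is not a split epimorphism.

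For the ``if'' direction, suppose $h=pf$ with $p\colon X\to Y$ a split epimorphism (in particular $Y\neq 0$), and verify the three conditions defining an irreducible map. A retraction of $h$ composed with $p$ would be a left inverse of $f$, so $h$ is not a split monomorphism. If $h$ were a split epimorphism then, $\tau M$ being indecomposable and $Y$ nonzero, $h$ would be an isomorphism; but then $fh^{-1}$ is a split monomorphism with image $\im f$, so $\im f$ is a direct summand of $X$ and $f$ splits, which is impossible. For the factorization condition, write $h=vw$ with $w$ not a split monomorphism; by the AR property $w=\tilde w f$ for some $\tilde w\colon X\to Z$, and comparing $v\tilde w f=pf$ gives $p-v\tilde w=\psi g$ for some $\psi\colon M\to Y$, whence $1_Y=v\tilde w s+\psi g s$ after composing with a section $s$ of $p$. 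It remains to see that $1_Y-\psi g s$ is an automorphism, for then $v$ acquires a right inverse and is a split epimorphism. Writing $\psi g s=\psi\cdot(gs)$ and using the fact above, $1_Y-\psi\cdot(gs)$ is invertible if and only if $1_M-(gs)\psi=1_M-g\cdot(s\psi)$ is invertible in the local ring $\End_{\CM(R)_0}(M)$; were it not, $g\cdot(s\psi)$ would be an automorphism of $M$, so $(s\psi)\bigl(g(s\psi)\bigr)^{-1}$ would be a section of $g$, contradicting nonsplitting. The dual half is the same: from $h'=g\iota$ and a factorization $h'=vw$ with $v$ not a split epimorphism one writes $v=g\tilde v$, deduces $\tilde v w-\iota=f\delta$ for some $\delta\colon Y\to\tau M$, and after composing with a retraction $\rho$ of $\iota$ obtains $\rho\tilde v w=1_Y+\rho f\delta$; one checks $1_Y+\rho f\delta$ is an automorphism by transferring to $\End_{\CM(R)_0}(\tau M)$, since otherwise $\delta\rho f$ would be an automorphism of $\tau M$, forcing $f$ to split.

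The step I expect to be the crux is this factorization condition in the ``if'' direction. Everything else is a routine transcription of the classical almost split sequence argument, but here one must arrange the bookkeeping so that the auxiliary self-map ($\psi g s$, respectively $\rho f\delta$) can fail to be an automorphism of $Y$ only at the cost of contradicting the nonsplitting of~\eqref{eqn:AR seq}; the passage through $\End_{\CM(R)_0}(M)$ or $\End_{\CM(R)_0}(\tau M)$ via the $1-\alpha\beta$ identity is what lets this go through for arbitrary, not necessarily indecomposable, $Y$. A small point to keep track of is that the factorizations produced by the AR property may be taken among degree zero maps, so that the resulting $p$ and $\iota$ genuinely lie in $\Hom_R(X,Y)_0$ and $\Hom_R(Y,X)_0$.
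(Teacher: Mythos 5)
Your proof is correct and is precisely the classical characterization of irreducible maps around an almost split sequence that the paper's proof simply cites (Yoshino, Lemma 2.13; Auslander--Reiten--Smal\o{}, Ch.~V, Theorem 5.3), transcribed into $\CM(R)_0$ with the right attention to degree-zero factorizations and the locality of $\End_R(M)_0$ and $\End_R(\tau M)_0$. Nothing of substance differs from the cited argument, so this matches the paper's (delegated) approach.
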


\begin{proof} Cf. \cite[Lemma 2.13]{Yoshino:book}, \cite[Ch. V, Theorem 5.3]{AuslanderReitenSmalo}. \end{proof}

\begin{lemma} \label{irreducible-maps-are-cancellative} (\cite[Lemma 4.1.8]{THESIS}) If $f \colon M \to N$ is an irreducible map in $\CM(R)_0$, then $f$ must be either a monomorphism or an epimorphism. \end{lemma}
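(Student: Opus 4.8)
The plan is to argue by contradiction: suppose $f\colon M\to N$ is irreducible in $\CM(R)_0$ but is neither mono nor epi, and derive a contradiction by splitting $f$ through an intermediate module. Since $\CM(R)$ is a category of torsionfree modules over a one-dimensional ring, let $K=\ker f$ and $I=\im f$, so we have a factorization $M\twoheadrightarrow I\hookrightarrow N$, and $I$ is a submodule of the maximal Cohen-Macaulay module $N$, hence itself torsionfree; the trouble is that $I$ need not be maximal Cohen-Macaulay if $K\neq 0$, and dually the epimorphism $M\to I$ is not in $\CM(R)_0$. So the first step is to replace $I$ by a genuine $\CM(R)$-module through which $f$ still factors. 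Concretely, I would take $X$ to be the pushout (or a Cohen-Macaulayfication) realizing a factorization $M\xrightarrow{h_1} X\xrightarrow{h_2} N$ in $\CM(R)_0$ with $h_2 h_1=f$ — for instance, letting $X$ be a maximal Cohen-Macaulay approximation of $I$, or by choosing $X=M/(\text{torsion-free quotient that is MCM})$ after a bookkeeping argument on syzygies; the key point is only that such a factorization through a module in $\CM(R)_0$ exists whenever $f$ is simultaneously non-injective and non-surjective.

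Granting such a factorization $f=h_2 h_1$ with $h_1,h_2\in\CM(R)_0$, irreducibility of $f$ forces $h_1$ to be a split monomorphism or $h_2$ a split epimorphism. If $h_1$ is split mono, then $M$ is a direct summand of $X$; since $X$ was built to have smaller rank (or to be a proper quotient/submodule in a controlled sense), comparing ranks gives that $h_1$ is in fact an isomorphism, whence $f=h_2 h_1$ is injective (being, up to iso, the monomorphism $h_2$ restricted appropriately) — contradicting that $f$ is not mono, unless $h_2$ is also forced to be mono, in which case $f$ is mono, again a contradiction. Symmetrically, if $h_2$ is a split epimorphism, then $X\cong N\oplus(\text{something})$, and the rank count forces $h_2$ to be an isomorphism, so $f$ is an epimorphism — contradiction. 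Either way we reach the desired contradiction, so $f$ must be mono or epi.

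The main obstacle is the very first step: producing the intermediate $\CM(R)_0$-module $X$ and the factorization $f=h_2h_1$ in a way that is clean enough that the subsequent rank/split-summand bookkeeping actually closes. The naive candidate $X=\im f$ fails to be maximal Cohen-Macaulay exactly when $f$ is non-injective, so one must invest in either a maximal Cohen-Macaulay approximation (Auslander-Buchweitz) of $\im f$, or exploit the specific structure of one-dimensional Gorenstein rings — e.g. that $\im f$ and $N/\im f$ are each torsionfree or of finite length, letting one split off the torsion. I expect the cleanest route is: $f$ not epi means $N/\im f$ is a nonzero torsionfree (hence MCM, since $\dim R=1$ and quotients of MCM by MCM submodules... ) — more carefully, one checks $\im f\in\CM(R)$ automatically once $f$ is mono, and $\coker f\in\CM(R)$ once $f$ is epi-onto-a-summand; the genuinely mixed case is handled by first reducing to one of these via a pullback along the inclusion $\im f\into N$. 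Once the factorization is in hand the rest is a short diagram chase using only the definition of irreducible and the fact that split mono/epi between MCM modules of equal rank is an isomorphism.
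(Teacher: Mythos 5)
Your overall skeleton is the standard one (factor $f$ through its image, apply the definition of irreducibility to the factorization, and upgrade a split mono/epi to an isomorphism), but the proposal has a genuine gap at precisely the step you flag as "the main obstacle": you never actually produce the factorization $f=h_2h_1$ inside $\CM(R)_0$. You assert that $I=\im f$ "need not be maximal Cohen-Macaulay if $K\neq 0$" and then list several unexecuted alternatives (MCM approximation, pushout, pullback along $\im f\hookrightarrow N$), ending with "I expect the cleanest route is\dots" without closing any of them. In fact your worry is unfounded in this setting: $R$ is one-dimensional Cohen--Macaulay, so a finitely generated graded module is maximal Cohen--Macaulay exactly when $\m$ is not an associated prime, and since $\Ass(\im f)\subseteq\Ass(N)$, any nonzero submodule of the MCM module $N$ is again MCM (the kernel of $f$ is irrelevant here). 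Thus the naive candidate $X=\im f$ lies in $\CM(R)$, both maps $M\twoheadrightarrow \im f$ and $\im f\hookrightarrow N$ are degree-zero maps of MCM modules, and the factorization you want is immediate (note $f\neq 0$, since a zero map between nonzero modules is never irreducible). The proposal as written does not contain this observation, and without it the argument does not start.

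The second half is also more complicated than necessary: no rank count or comparison of summands is needed. If $p\colon M\to\im f$ is a split monomorphism, then being both split mono and surjective it is an isomorphism, so $f=i\circ p$ is injective; dually, if $i\colon\im f\to N$ is a split epimorphism, then being both split epi and injective it is an isomorphism, so $f$ is surjective. Your appeal to "$X$ was built to have smaller rank" is unsupported (and false for $X=\im f$ in general), so that bookkeeping should be discarded in favor of the elementary categorical fact above. The paper itself only cites \cite[Lemma 4.1.8]{THESIS} for this statement, but the proof just sketched is the standard one and is what your proposal should reduce to once the image is recognized as an object of $\CM(R)$.
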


\begin{notation} \label{row-and-column-matrices} Several short exact sequences in this paper will include a written direct-sum-decomposition (not necessarily into indecomposables) of the middle term, for example 

\begin{center}$\xymatrix{0 \ar[r] & X \ar[r]^-f & Y_1 \oplus Y_2 \ar[r]^-g & Z \ar[r] & 0}$.
\end{center}
 Given such a diagram, if we write $f=[f_1,f_2]^T$ and $g=[g_1,g_2]$, this means that $g$ restricted to $Y_i$ is $g_i$, and similarly $f$ induces maps $f_1 \colon X \to Y_1$ and $f_2 \colon X \to Y_2$ with respect to the direct sum decomposition $Y_1 \oplus Y_2$. \end{notation}

\begin{lemma} \label{epi-pairs} If $\xymatrix{0 \ar[r] & X \ar[r]^-{[f_1, f_2]^T} & Y_1 \oplus Y_2 \ar[r]^-{[g_1,g_2]} & Z \ar[r] & 0}$ is any short exact sequence of abelian groups, then $f_1$ is an epimorphism if and only if $g_2$ is an epimorphism. If it is an AR sequence in $\CM(R)_0$, then each $f_i$ and $g_i$ is irreducible, and: $f_1$ is a monomorphism if and only if $g_2$ is a monomorphism. \end{lemma}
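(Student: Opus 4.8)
The plan is to handle the purely group-theoretic statement first and then bootstrap it to the AR-sequence assertions. First I would prove the equivalence "$f_1$ epi $\iff$ $g_2$ epi" by a direct diagram chase. Suppose $f_1$ is epi. Given $z \in Z$, lift it to some $(y_1, y_2) \in Y_1 \oplus Y_2$ with $g_1(y_1) + g_2(y_2) = z$; since $f_1$ is onto, pick $x$ with $f_1(x) = y_1$, and then $(y_1, y_2) - (f_1(x), f_2(x)) = (0, y_2 - f_2(x))$ maps to $z$ under $g$, so $g_2(y_2 - f_2(x)) = z$ and $g_2$ is epi. Conversely, if $g_2$ is epi, then for any $y_1 \in Y_1$ choose $y_2$ with $g_2(y_2) = -g_1(y_1)$; then $(y_1, y_2) \in \ker g = \im f$, so $(y_1, y_2) = (f_1(x), f_2(x))$ for some $x$, giving $f_1(x) = y_1$. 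This half needs no hypotheses beyond exactness.

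For the second part, assume the sequence is an AR sequence in $\CM(R)_0$. That each $f_i$ and each $g_i$ is irreducible is exactly Lemma~\ref{every-irreducible-map}: the inclusions $Y_i \into Y_1 \oplus Y_2$ are split monomorphisms and the projections are split epimorphisms, so $g_i = g \circ (\text{split mono})$ is irreducible and $f_i = (\text{split epi}) \circ f$ is irreducible. (One should note they are genuinely maps, not zero or isomorphisms; if some $Y_i$ were zero the statement is vacuous, and irreducibility is what Lemma~\ref{every-irreducible-map} delivers.) Now by Lemma~\ref{irreducible-maps-are-cancellative} each irreducible map in $\CM(R)_0$ is either mono or epi. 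So $f_1$ is mono or epi, and $g_2$ is mono or epi. If $f_1$ is mono but not epi, then by the first part $g_2$ is not epi, hence $g_2$ must be mono; conversely if $g_2$ is mono but not epi then $f_1$ is not epi (first part again), hence mono. If $f_1$ is an isomorphism it is both, and then $g_2$ is epi hence — being irreducible — cannot also be mono unless it too splits; but if $g_2$ splits the sequence is not an AR sequence (it would force a free or split summand contradicting indecomposability of $\tau M$ or $M$), so this degenerate case does not arise, or is again vacuous. The clean statement is: among "mono" and "epi", $f_1$ has a unique one precisely when it is not an isomorphism, and likewise for $g_2$, and the first part pairs their "epi" status; hence it pairs their "mono" status.

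The main obstacle I anticipate is tidying the boundary between "$f_i$ could a priori be an isomorphism (hence both mono and epi)" and the clean biconditional "$f_1$ mono $\iff$ $g_2$ mono." The resolution is that in an AR sequence no component map $f_i$ or $g_i$ can be a split mono or split epi (that is part of irreducibility), and over a one-dimensional ring an isomorphism in $\CM(R)_0$ is in particular split; so no $f_i$ or $g_i$ is an isomorphism, each is therefore \emph{exclusively} mono or \emph{exclusively} epi, and the equivalence of the first part transports directly. I would phrase the write-up so that this dichotomy is invoked once and the biconditional for monos follows formally from the biconditional for epis by the exclusivity.
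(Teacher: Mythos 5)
Your proposal is correct and follows the same route as the paper: the first assertion is the standard diagram chase, and the second is deduced from Lemma~\ref{every-irreducible-map} (each component is a composition of the AR map with a split mono/epi, hence irreducible) together with the mono-or-epi dichotomy of Lemma~\ref{irreducible-maps-are-cancellative}. Your extra observation that no $f_i$ or $g_i$ can be an isomorphism (since irreducible maps are neither split mono nor split epi), so that ``mono'' and ``epi'' are exclusive alternatives, is exactly the point needed to make the biconditional for monos follow formally from the one for epis.
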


\begin{proof} The first sentence is straightforward, and the second  follows from Lemmas~\ref{every-irreducible-map} and~\ref{irreducible-maps-are-cancellative}. \end{proof}

\begin{notation} If $M$ is a graded $R$-module, and $n \in \mathbb{Z}$, then the graded shift $M(n)$ is the graded $R$-module given by $M(n)_i=M_{n+i}, \forall i \in \mathbb{Z}$. \end{notation}

\begin{lemma}\label{tau-formula} For any indecomposable nonfree $M 
\in L_p(R)$,  we have $\tau M=\Omega M (a)$, where $a=a(R)$ is the $a$-invariant of $R$. \end{lemma}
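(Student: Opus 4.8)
The plan is to derive the formula $\tau M = \Omega M(a)$ from the known description of the Auslander–Reiten translate in terms of the transpose and the canonical module, specialized to the one-dimensional graded Gorenstein setting. Recall that in general Auslander–Reiten theory for maximal Cohen–Macaulay modules, one has $\tau M = \Hom_R(\Omega^{d}\,\mathrm{Tr}\, M,\, \omega_R)$ where $d$ is the Krull dimension; here $d = 1$ and, because $R$ is Gorenstein with $a$-invariant $a$, the graded canonical module is $\omega_R = R(a)$. So the target reduces to identifying $\Hom_R(\Omega\,\mathrm{Tr}\, M, R(a))$ with $\Omega M(a)$, i.e. (pulling out the shift) to showing $\Hom_R(\Omega\,\mathrm{Tr}\, M, R) \cong \Omega M$ in $\CM(R)_0$ for $M$ indecomposable, nonfree, in $L_p(R)$.

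**First I would** set up the minimal presentation $F_1 \xrightarrow{\varphi} F_0 \to M \to 0$ with $\Omega M = \operatorname{im}(\varphi) = \ker(F_0 \to M)$, so that by definition $\mathrm{Tr}\, M = \cok(\varphi^*\colon F_0^* \to F_1^*)$ sits in the exact sequence $F_0^* \xrightarrow{\varphi^*} F_1^* \to \mathrm{Tr}\, M \to 0$. Dualizing the short exact sequence $0 \to \Omega M \to F_0 \xrightarrow{\ } M \to 0$: since $M \in L_p(R)$ and $R$ is Gorenstein of dimension one, $M$ is torsionfree but generally has nonzero $\Ext^1_R(M,R)$ — indeed $M^* \hookrightarrow F_0^* \to (\Omega M)^* \to \Ext^1_R(M,R) \to 0$, and on the punctured spectrum everything is free so $\Ext^1_R(M,R)$ has finite length, hence is zero because it is a submodule of... — no; more carefully, $(\Omega M)^*$ is maximal Cohen–Macaulay (it is a second syzygy up to the Gorenstein duality), so from $0 \to M^* \to F_0^* \to (\Omega M)^* \to \Ext^1_R(M,R) \to 0$ and the fact that $\Ext^1_R(M,R)$ embeds into no... Let me instead use the cleaner route: the transpose exact sequence $0 \to M^* \to F_0^* \to F_1^* \to \mathrm{Tr}\, M \to 0$ breaks as $0 \to M^* \to F_0^* \to \Omega\,\mathrm{Tr}\, M \to 0$ together with $0 \to \Omega\,\mathrm{Tr}\, M \to F_1^* \to \mathrm{Tr}\, M \to 0$. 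The first of these is (up to the minimality issue) a presentation exhibiting $\Omega\,\mathrm{Tr}\, M$ as the cokernel of $\varphi^*$ restricted appropriately; now dualize it back: applying $(\,)^* = \Hom_R(\,\cdot\,, R)$ to $0 \to M^* \to F_0^* \to \Omega\,\mathrm{Tr}\, M \to 0$ and using that $(\,)^*$ is a duality on $\CM(R)$ (so $M^{**} = M$, $F_0^{**} = F_0$, and $\Ext^1_R(\Omega\,\mathrm{Tr}\, M, R) = 0$ since $\Omega\,\mathrm{Tr}\, M$ is a syzygy, hence maximal Cohen–Macaulay, hence reflexive with vanishing $\Ext^1$ into $R$ over a Gorenstein ring of dimension one) gives $0 \to (\Omega\,\mathrm{Tr}\, M)^* \to F_0 \to M \to 0$ exact, which identifies $(\Omega\,\mathrm{Tr}\, M)^* \cong \Omega M$ provided the presentation of $M$ we started from is minimal.

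**The main obstacle** I expect is bookkeeping the grading shifts and the minimality/no-free-summands hypotheses so that all the isomorphisms are genuinely in $\CM(R)_0$ and land on the \emph{graded} syzygy (the minimal one) rather than a syzygy with extra free summands. Concretely: I must check that the minimal presentation dualizes to a minimal presentation (this is where $M$ having no free summands, equivalently $\varphi$ having entries in $\m$, is used — $\varphi^*$ then also has entries in $\m$), so that $\Omega\,\mathrm{Tr}\, M$ is the honest minimal second syzygy and its dual is the honest minimal first syzygy $\Omega M$; and that all graded degrees of the free modules in the presentation are tracked so that the canonical module contributes exactly the shift $(a)$. I would also invoke Lemma~\ref{lem:Rmk} or the standard graded local duality only to pin down $\omega_R = R(a)$, and cite the general formula $\tau M = \Hom_R(\Omega\,\mathrm{Tr}\, M, \omega_R)$ (Auslander–Reiten; see e.g.\ \cite{Yoshino:book} or \cite{Auslander-Reiten:gradedCM}) as the starting point, so the work is entirely in the identification $\Hom_R(\Omega\,\mathrm{Tr}\, M, R) \cong \Omega M$ together with the shift.
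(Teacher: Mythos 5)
Your proposal is correct and follows essentially the same route as the paper, which gives no argument of its own but simply cites \cite[Proposition 3.11]{Yoshino:book} and the proof of \cite[Theorem 3]{Auslander-Reiten:gradedCM} --- i.e.\ exactly the formula $\tau M=\Hom_R(\Omega^d\,\mathrm{Tr}\,M,\omega_R)$ with $d=1$ and $\omega_R=R(a)$ that you take as your starting point. Your unpacking of the identification $(\Omega\,\mathrm{Tr}\,M)^*\cong\Omega M$ (via dualizing $0\to M^*\to F_0^*\to\Omega\,\mathrm{Tr}\,M\to 0$, using reflexivity of $M$, vanishing of $\Ext^1$ into $R$ for maximal Cohen--Macaulay modules, and minimality of the dualized presentation) is a correct and complete account of what the cited references do.
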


\begin{proof} Cf. \cite[Proposition 3.11]{Yoshino:book} or the proof of \cite[Theorem 3]{Auslander-Reiten:gradedCM}. \end{proof}

\begin{notation} \label{ntn:syz-of-a-map} Let $f\colon M \to N$ be a morphism in $\CM(R)_0$. By extending $f$ to a map between the minimal free resolutions of $M$ and $N$, we get induced morphisms $\Omega^n f \colon  \Omega^n M \to \Omega^n N \in \CM(R)_0$, for each integer $n$. (Of course, these are not quite uniquely determined.) Likewise, we have morphisms $\tau^n f \colon \Omega^n M (a(R)) \to \Omega^n N (a(R))$. \end{notation}

\begin{lemma} \label{lem:syz-of-irreducible} (\cite[Theorem 3.1]{Puth},\cite[Lemma 4.1.7]{THESIS})  Let $f\colon M \to N$ be a morphism in $\CM(R)_0$, and assume $M$ and $N$ contain no free direct summands. If $f$ is irreducible, then so is any choice of $\Omega^n f$, for all $n \in \mathbb{Z}$.\end{lemma}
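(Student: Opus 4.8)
The plan is to reduce to the cases $n=1$ and $n=-1$ and then to show that the syzygy operation preserves irreducibility. For $n>1$ one iterates, taking $\Omega^n f = \Omega(\Omega^{n-1}f)$ as in Notation~\ref{ntn:syz-of-a-map}. For $n<0$ one uses the duality $(-)^* = \Hom_R(-,R)$ on $\CM(R)_0$: it is an additive anti-equivalence, so it preserves split monomorphisms, split epimorphisms, and factorizations, hence carries irreducible maps to irreducible maps; moreover every module in $\CM(R)$ is reflexive since $R$ is Gorenstein of dimension one, and $\Omega^{-1}M \cong (\Omega(M^*))^*$, so $\Omega^{-n}f$ is, up to the duality, $\Omega^{n}(f^*)$, and the positive case applies. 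Thus it suffices to prove: if $f$ is irreducible in $\CM(R)_0$, then $\Omega f$ is irreducible in $\CM(R)_0$.

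The core step is to pass to the stable category $\underline{\CM}(R)_0$. Because $M$ and $N$ have no free direct summands, the minimal free resolution produces $\Omega M$ and $\Omega N$ with no free direct summands either (each syzygy lies in $\m$ times a free module). For modules without free summands, Krull--Schmidt shows that a module is recovered up to isomorphism from its stable isomorphism class, and from this one checks that a degree zero map $h\colon X\to Y$ between such modules is a split monomorphism (resp.\ split epimorphism) in $\CM(R)_0$ if and only if its image in $\underline{\CM}(R)_0$ is, and that $h$ is irreducible in $\CM(R)_0$ if and only if its image is irreducible in $\underline{\CM}(R)_0$: when lifting a stable factorization to $\CM(R)_0$ the error term factors through a free module and can be absorbed into the middle object, which changes neither the relevant split monomorphism nor the relevant split epimorphism. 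Now, since $R$ is Gorenstein of dimension one, $\underline{\CM}(R)_0$ is a triangulated category whose shift is the cosyzygy $\cosyz = \Omega^{-1}$, so $\Omega$ induces an autoequivalence of $\underline{\CM}(R)_0$ (up to graded shift). An autoequivalence is additive and preserves split monomorphisms, split epimorphisms, and factorizations, hence preserves irreducibility; therefore the stable class of $\Omega f$ is irreducible in $\underline{\CM}(R)_0$, and by the equivalence just described $\Omega f$ is irreducible in $\CM(R)_0$.

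Alternatively, one may argue directly with Auslander--Reiten sequences: after a standard reduction to the case $N$ indecomposable nonfree, use Lemma~\ref{every-irreducible-map} to write $f=g\iota$ with $\iota$ a split monomorphism into the middle term $X$ of the AR sequence $0\to\tau N\to X\to N\to 0$; apply the horseshoe lemma to this sequence and invoke Lemma~\ref{tau-formula} (applied to $\Omega N$, which is again nonfree and indecomposable) to identify $\Omega(\tau N)\cong\tau(\Omega N)$, so that the resulting sequence $0\to\tau(\Omega N)\to \Omega X\oplus(\text{free})\to\Omega N\to 0$ is, up to free summands, the AR sequence ending in $\Omega N$; then $\Omega\iota$ followed by the inclusion is a split monomorphism into that middle term, and Lemma~\ref{every-irreducible-map} shows $\Omega f=(\Omega g)(\Omega\iota)$ is irreducible. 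In either route the only genuine subtlety --- and the step to be handled with care --- is the bookkeeping of free direct summands: the syzygy operation is functorial only on the stable category, so one must verify that passing to the stable category neither creates nor destroys irreducible maps among the modules in play, and that the horseshoe lemma's extra free summand and maps factoring through free modules may be ignored when testing the defining conditions for irreducibility.
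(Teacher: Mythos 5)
The paper offers no argument for this lemma at all --- it defers entirely to the two citations --- so there is no internal proof to compare against; judged on its own, your main (stable-category) argument is correct and is essentially the standard proof of this fact. The reductions are fine: iteration handles $n>1$, and for $n<0$ the duality $(-)^{*}$ is an additive anti-equivalence of $\CM(R)_0$ that preserves the absence of free summands and interchanges syzygy with cosyzygy. The crux is exactly where you locate it: for modules with no free direct summands, a degree zero map is split mono, split epi, or irreducible in $\CM(R)_0$ if and only if its class is so in the stable category. Your sketch of this is right, and the details you leave implicit are supplied by Lemma~\ref{stablezero-in-radical} together with Nakayama: if $\underline{u}\,\underline{g'}=\underline{1_X}$ in the stable category, then $ug'-1_X$ carries $X$ into $\m X$, so $ug'$ is an automorphism and $g'$ is genuinely split mono (dually for split epis); the forward direction is your ``absorb the error term into $Z\oplus F$'' factorization. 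Since $\Omega$ is an autoequivalence of the stable category (Gorenstein hypothesis) and any two choices of $\Omega^n f$ agree there, this also disposes of the ``any choice'' clause automatically. Two small points to tighten: the parenthetical ``each syzygy lies in $\m$ times a free module'' does not by itself show $\Omega M$ has no free direct summand --- one should add that every map from $\Omega M$ to a free module factors through the inclusion $\Omega M\into F$ (as in the proof of Lemma~\ref{lem:in-turn}), so a free summand of $\Omega M$ would give a direct summand of $F$ inside $\m F$, which Nakayama forbids; and your alternative AR-sequence route opens with ``a standard reduction to the case $N$ indecomposable,'' which is not available when both ends of an irreducible map are decomposable --- but as that route is offered only as an alternative, the main proof stands.
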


\begin{lemma} \label{stablezero-in-radical} Let $M$ and $N$ be finitely-generated, graded $R$-modules, and assume $M$ has no free direct summand. If $f \colon M \to N$ factors through a free $R$-module, then $f(M) \subseteq \m N$. \end{lemma}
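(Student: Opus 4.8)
The plan is to reduce the statement to showing that the first map in any free factorization of $f$ already lands in $\m F$. First I would write $f = \beta \circ \alpha$ with $\alpha \colon M \to F$ and $\beta \colon F \to N$, where $F$ is free; since $M$ is finitely generated, $\alpha(M)$ lies in a finitely generated free direct summand of $F$, so I may assume $F$ itself is finitely generated and free. It then suffices to prove $\alpha(M) \subseteq \m F$, because $R$-linearity of $\beta$ gives $f(M) = \beta(\alpha(M)) \subseteq \beta(\m F) = \m\,\beta(F) \subseteq \m N$.

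To prove $\alpha(M) \subseteq \m F$ I would split $\alpha$ into its finitely many homogeneous components (possible since $\Hom_R(M,F)$ is a finitely generated graded $R$-module) and treat each piece separately, using that $\alpha(M) \subseteq \sum_i \psi_i(M)$ for the homogeneous components $\psi_i$. So let $\psi \colon M \to F$ be homogeneous and suppose for contradiction that $\psi(M) \not\subseteq \m F$. Fixing a homogeneous basis $(e_\lambda)$ of $F$ with coordinate projections $\pi_\lambda \colon F \twoheadrightarrow R e_\lambda$, the assumption $\psi(M) \not\subseteq \m F = \bigoplus_\lambda \m e_\lambda$ forces $\pi_\lambda \circ \psi$ to be nonzero modulo $\m$ for some $\lambda$; that is, the image of the homogeneous map $p := \pi_\lambda \circ \psi \colon M \to R e_\lambda \cong R(n)$, viewed (after this identification) as a graded ideal $J$ of $R$, is not contained in $\m$.

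The crux is that such a graded ideal $J$ must equal $R$: being graded, $J$ contains the degree-zero component of any of its elements lying outside $\m$, and that component is a nonzero element of $R_0 = k$, hence a unit; so $1 \in J$ and $p$ is surjective. A surjection onto the free (hence projective) module $R e_\lambda$ splits, so $R e_\lambda \cong R(n)$ is a direct summand of $M$, contradicting the hypothesis that $M$ has no free direct summand. Hence each homogeneous component of $\alpha$, and therefore $\alpha$ itself, maps $M$ into $\m F$, which finishes the proof. I expect the only delicate points to be the step ``graded ideal not contained in $\m$ $\Rightarrow$ equal to $R$'' and the reduction to homogeneous components that makes it applicable; these are exactly where the grading hypotheses on $R$ are used, since for an arbitrary non-local ring the analogous claim about ideals would be false. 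The remaining verifications — the behavior of the coordinate projections and that a split surjection onto a free module yields a free summand — are routine.
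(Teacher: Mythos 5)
Your proof is correct and takes essentially the same route as the paper's: reduce to a free target, reduce to homogeneous components, and derive a split surjection onto a rank-one graded free module (hence a free direct summand of $M$) from any homogeneous component whose image is not contained in $\m F$. The only cosmetic difference is that you project onto a fixed homogeneous basis element, whereas the paper extends the offending image element $f(m)$ to a basis of $N$.
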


\begin{proof} We may  assume $N$ is free. As $f$ is necessarily a sum of graded maps (cf. \cite[Exercise 1.5.19 (f)]{BH}), we may assume $f$ is homogeneous. Now if $m \in M$ is homogeneous, and $f(m) \notin \m N$, then $f(m)$ can be extended to a basis of $N$, and we may obtain a surjection $M \to R f(m)$, contradicting our assumption that $M$ has no free direct summand.
\end{proof}

\begin{lemma} Let $f \colon M \to N$ be a morphism in $\CM(R)_0$, and let $n$ be an integer. If $g$ and $h \in \Hom_R(\Omega^n M, \Omega^n N)$ are two choices for $\Omega^n f$, and $g$ is epi, then so is $h$. \end{lemma}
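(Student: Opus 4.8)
The plan is to reduce the statement to two facts, each valid for the given $n$: \textbf{(i)} $\Omega^n M$ (hence also $\Omega^n N$) has no free direct summand, and \textbf{(ii)} any two choices of $\Omega^n f$ differ by a map that factors through a free $R$-module. Once these are in hand, the conclusion is a one-line application of Lemma~\ref{stablezero-in-radical} together with the graded Nakayama lemma. First I would dispose of the case $n=0$: there $\Omega^0 f = f$ is the unique choice, so $g=h$ and there is nothing to prove. From now on assume $n\neq 0$.

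Next I would verify (i). For $n\ge 1$ this is immediate from the minimality of the free resolutions used in Notation~\ref{ntn:syz-of-a-map} to form $\Omega^n M$ and $\Omega^n N$. For $n\le -1$ I would invoke the identification $\Omega^n M \cong (\Omega^{-n}(M^*))^*$: since $\Omega^{-n}(M^*)$ has no free summand by the case just treated, and $(\,)^*$ is a duality on $\CM(R)$ (and so carries a module with no free summand to one with no free summand), $\Omega^n M$ has no free direct summand, and likewise for $\Omega^n N$.

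The heart of the argument is (ii). For $n\ge 1$, $g$ and $h$ are induced by two chain liftings $\phi_\bullet,\psi_\bullet$ of $f$ to minimal free resolutions $F_\bullet\onto M$ and $G_\bullet\onto N$, and any two such liftings are chain homotopic. Writing $\phi_{n-1}-\psi_{n-1}=d^G_n s_{n-1}+s_{n-2}d^F_{n-1}$ and restricting to $\Omega^n M=\ker d^F_{n-1}\subseteq F_{n-1}$ kills the second term, so $g-h=(d^G_n s_{n-1})|_{\Omega^n M}$, which factors as $\Omega^n M\to G_n\to \Omega^n N$ through the free module $G_n$. For $n\le -1$ I would run the dual argument, using that $(\,)^*$ sends a map factoring through a free module to a map factoring through a free module. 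I expect this step to be the main obstacle, although it is still routine: it is mostly bookkeeping with minimal resolutions and checking the $n=1$ and $n<0$ edge cases, while everything around it is formal.

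Finally, with (i) and (ii) established: Lemma~\ref{stablezero-in-radical} gives $(g-h)(\Omega^n M)\subseteq \m\,\Omega^n N$. Given any $y\in\Omega^n N$, pick $x\in\Omega^n M$ with $g(x)=y$ (possible since $g$ is epi); then $y = h(x) + (g-h)(x)\in h(\Omega^n M) + \m\,\Omega^n N$, so $\Omega^n N = h(\Omega^n M) + \m\,\Omega^n N$. Since $\Omega^n N$ is finitely generated and $h$ is a degree zero map, $h(\Omega^n M)$ is a graded submodule, so the graded Nakayama lemma forces $h(\Omega^n M)=\Omega^n N$, i.e. $h$ is epi.
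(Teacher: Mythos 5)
Your proposal is correct and takes essentially the same route as the paper: show $g-h$ factors through a free module (chain homotopy for positive $n$, Gorenstein duality for negative $n$), deduce $(g-h)(\Omega^n M)\subseteq \m\,\Omega^n N$ via Lemma~\ref{stablezero-in-radical}, and conclude by Nakayama. You simply spell out the details the paper leaves as "well known," including the (genuinely needed) check that $\Omega^n M$ has no free direct summand.
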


\begin{proof} For $n\ge 0$, it is well known that $g-h$ must factor through a free module, and it is easy to see that this still holds for $n<0$ by Gorenstein duality. Therefore $(g-h)(\Omega^nM) \subseteq \m \Omega^n N$ by Lemma~\ref{stablezero-in-radical}. The assertion now follows from Nakayama's Lemma. \end{proof}

\begin{lemma}\cite[Lemma 4.1.13]{THESIS} \label{lem:counting-k-dim} Let $\xymatrix {0 \ar[r] & X \ar[r] & Y \ar[r]^g & Z \ar[r] &0}$ be a short exact sequence in $\CM(R)_0$. Then $\Omega g$ is an epimorphism if and only if $\m X=X \cap \m Y$. \end{lemma}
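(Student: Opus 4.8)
The plan is to reinterpret both conditions as the vanishing of the same connecting homomorphism in the long exact sequence obtained by applying $-\otimes_R k$ to the given sequence, where $k=R/\m$. Writing the sequence as $0\to X\xrightarrow{\iota}Y\xrightarrow{g}Z\to 0$ and using $\operatorname{Tor}_0^R(-,k)=(-)\otimes_R k$, the relevant four-term piece of the $\operatorname{Tor}(-,k)$ long exact sequence reads
\[
\operatorname{Tor}_1^R(Y,k)\xrightarrow{\ \operatorname{Tor}_1^R(g,k)\ }\operatorname{Tor}_1^R(Z,k)\xrightarrow{\ \partial\ }X\otimes_R k\xrightarrow{\ \iota\otimes k\ }Y\otimes_R k .
\]
Exactness at $X\otimes_R k$ shows that $\iota\otimes k$ is a monomorphism $\iff\partial=0\iff\operatorname{Tor}_1^R(g,k)$ is an epimorphism. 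So it suffices to match the left-hand condition with ``$\Omega g$ epi'' and the right-hand condition with ``$\m X=X\cap\m Y$''.

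The right-hand identification is elementary: identifying $X$ with $\iota(X)\subseteq Y$, the kernel of the natural map $X/\m X\to Y/\m Y$ is $(X\cap\m Y)/\m X$, which vanishes exactly when $X\cap\m Y\subseteq\m X$, and since $\m X\subseteq X\cap\m Y$ always holds this is equivalent to $\m X=X\cap\m Y$. For the left-hand identification, recall the natural isomorphism $\operatorname{Tor}_1^R(W,k)\cong\Omega W\otimes_R k$: in a minimal free resolution $\cdots\to F_1^W\to F_0^W\to W\to 0$, minimality forces the differentials to become zero after $-\otimes_R k$, so $\operatorname{Tor}_1^R(W,k)=F_1^W\otimes_R k$, and since $F_1^W\onto\Omega W$ is a minimal cover this equals $\Omega W\otimes_R k$. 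Lifting $g$ to a chain map of minimal free resolutions, its degree-one component restricts to a legitimate choice of $\Omega g\colon\Omega Y\to\Omega Z$ and induces $\operatorname{Tor}_1^R(g,k)$; under the isomorphism above this induced map is precisely $\Omega g\otimes_R k$. By Nakayama's Lemma $\Omega g\otimes_R k$ is epi iff $\Omega g$ is epi, and by the lemma immediately preceding this one epi-ness of $\Omega g$ does not depend on the choice. Chaining the equivalences gives the statement.

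The only step that requires genuine care is the compatibility just invoked: that a chain lift of $g$ simultaneously yields a valid representative of $\Omega g$ and induces $\operatorname{Tor}_1^R(g,k)$, compatibly with the identification $\operatorname{Tor}_1^R(W,k)\cong\Omega W\otimes_R k$. This is routine once minimal free resolutions of $Y$ and $Z$ are fixed. Since $g$ has degree zero, all induced maps are degree-zero maps of graded modules, so the whole argument takes place inside $\CM(R)_0$; in fact nothing beyond the existence of $\Omega$ is used here.
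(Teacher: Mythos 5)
Your argument is correct and complete. The paper itself gives no proof of this lemma---it only cites \cite[Lemma 4.1.13]{THESIS}---so there is no in-paper argument to compare against; but your route (reading both conditions off the four-term piece $\operatorname{Tor}_1^R(Y,k)\to\operatorname{Tor}_1^R(Z,k)\to X\otimes_R k\to Y\otimes_R k$, identifying $\ker(\iota\otimes k)$ with $(X\cap\m Y)/\m X$ and $\operatorname{Tor}_1^R(g,k)$ with $\Omega g\otimes_R k$ via minimality, then applying graded Nakayama) is the natural one, and it is consistent with how the lemma is used later in the paper (the inequality $\dim_k(Y\otimes_R k)<\dim_k(X\otimes_R k)+\dim_k(Z\otimes_R k)$ in the proof of Lemma~\ref{lem:A-is-an-ideal} is exactly the rank count from this Tor sequence). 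One phrasing nitpick: the degree-one component $F_1^Y\to F_1^Z$ of the chain lift does not literally \emph{restrict} to $\Omega g$; rather it descends along the minimal covers $F_1^W\onto\Omega W$ (equivalently, $\Omega g$ is the restriction of the degree-zero component to $\Omega Y\subseteq F_0^Y$), but the commuting square you need after tensoring with $k$ holds either way, so nothing is affected. You also correctly flag and dispose of the only genuine well-definedness issue (independence of the choice of $\Omega g$) by the preceding lemma.
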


\begin{lemma} \label{lem:A-is-an-ideal} (cf. \cite[Lemma 2.1]{GZ}) Let  $\xymatrix {0 \ar[r] & X \ar[r]^f & Y \ar[r]^g & Z \ar[r] &0}$ 
be a short exact sequence in $\CM(R)_0$ with $g$ irreducible, and suppose that $\Omega g$ is a monomorphism. Then there exist graded maps $i \colon X \to \m^*$ and $j \colon \m^* \to Y$ in $\CM (R)$ such that $f=j i$; in particular, $X$ is isomorphic to a graded ideal of $R$, up to a graded shift. \end{lemma}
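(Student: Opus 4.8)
\subsection*{Proof proposal}

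The second assertion follows from the first: if $f=ji$ with $i\colon X\to\m^{*}$, then $i$ must be a monomorphism (because $f$ is), so $X$ is isomorphic to an $R$-submodule of $\m^{*}=\Hom_{R}(\m,R)$; since $R$ is one-dimensional and Gorenstein this is a rank-one torsion-free module — in fact $\m^{*}=(\m:_{R}\m)$ — hence isomorphic to a fractional ideal of $R$, and multiplying by a homogeneous nonzerodivisor to clear denominators exhibits $X$ as a genuine graded ideal, up to a shift. So I would concentrate on producing $i$ and $j$ with $f=ji$.

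I would first extract the content of ``$\Omega g$ is a monomorphism'', which is exactly the case complementary to Lemma~\ref{lem:counting-k-dim}. Lifting $g$ to the minimal graded free covers $F_{Y}\onto Y$ and $F_{Z}\onto Z$ produces a map $F_{Y}\to F_{Z}$ that is surjective (as $Z$ is a quotient of $Y$ and the covers are minimal); its kernel $K$ is then graded free of rank $\mu(Y)-\mu(Z)$. Feeding the resulting map of presentations into the snake lemma, one sees that $\Omega g$ is injective precisely when the induced map $K\to X$ is injective, so we obtain a short exact sequence
\[
0\longrightarrow K\longrightarrow X\longrightarrow \cok(\Omega g)\longrightarrow 0
\]
in $\CM(R)_{0}$ with $K$ graded free; and since $\Omega g$ is not an isomorphism (else $g$ would be a split epimorphism, contrary to irreducibility), it is not an epimorphism, so also $\m X\subsetneq X\cap\m Y$.

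Next I would bring in irreducibility of $g$ through the Auslander--Reiten sequence ending in $Z$. By Lemma~\ref{every-irreducible-map}, after discarding free summands $Y$ is (up to isomorphism) a direct summand of the middle term $E$ of $0\to\tau Z\to E\xrightarrow{\psi}Z\to 0$, with $g$ the corresponding component of $\psi$; writing $E=Y\oplus E'$ one identifies $X$ with the kernel of the complementary component of $\psi$ acting on $\tau Z=\Omega Z(a)$ (Lemma~\ref{tau-formula}). Applying $\Omega$ to this AR sequence and transporting ``$\Omega g$ mono'' through it by means of Lemma~\ref{epi-pairs}, one deduces that the free module $K$ above has rank one (equivalently $\mu(Y)=\mu(Z)+1$, equivalently $\operatorname{rank}X=1$) and that $\cok(\Omega g)$ has length one. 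Hence $X$ sits in a short exact sequence $0\to R(b)\to X\to k(c)\to 0$ for suitable shifts $b,c$; this extension is non-split, since $X$ is torsion-free of depth one while $R(b)\oplus k(c)$ has depth zero, and because $\Ext^{1}_{R}(k,R)\cong k$ is one-dimensional it coincides, up to a graded shift, with the extension of Lemma~\ref{lem:Rmk}. Therefore there is a graded isomorphism $i\colon X\xrightarrow{\ \sim\ }\m^{*}$, and $j:=f\circ i^{-1}\colon\m^{*}\to Y$ satisfies $f=ji$; the ``in particular'' is then immediate from the first paragraph.

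The main obstacle is the step in the last paragraph: proving that $\Omega g$ monomorphic together with $g$ irreducible forces $\operatorname{rank}K=1$ and $\ell(\cok\Omega g)=1$ — i.e. that these hypotheses cannot arise with $\mu(Y)-\mu(Z)\ge 2$, and that the torsion cokernel $\cok(\Omega g)$ collapses to a single copy of $k$. The pullback/snake-lemma computation, the passage to the AR sequence via Lemma~\ref{every-irreducible-map}, and the recognition of $\m^{*}$ through the uniqueness of its extension class are otherwise routine.
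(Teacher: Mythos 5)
Your reduction of the ``in particular'' clause to the factorization $f=ji$, and your snake-lemma computation producing $0\to K\to X\to\cok(\Omega g)\to 0$ with $K$ graded free, are both correct. But the step you yourself flag as the main obstacle is a genuine gap, and it cannot be filled, because what you are trying to prove there is strictly stronger than the lemma and is false in general. You assert that $g$ irreducible together with $\Omega g$ mono forces $\operatorname{rank}K=1$ and $\ell(\cok(\Omega g))=1$, hence $X\cong\m^*$ up to shift. No argument is actually given (``applying $\Omega$ to the AR sequence and transporting via Lemma~\ref{epi-pairs}'' yields mono/epi information about components, not these numerical constraints), and the intended conclusion is incompatible with how the lemma is used later: in the proof of Theorem~\ref{main-thm} the module $K=\ker(\tau^n p)$ is a \emph{periodic} ideal, the present lemma is invoked to factor its inclusion as $\kappa''\kappa'$ with $\kappa'\colon K\to\m^*$, and the argument there hinges on $\kappa'$ \emph{not} being surjective, precisely because $K$ is periodic while $\m^*$ is not (Lemma~\ref{why-edim-ge-3}). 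If your identification $X\cong\m^*$ were a consequence of the hypotheses, that step would be contradictory. The lemma only claims that $f$ factors through \emph{some} graded map $X\to\m^*$; it does not claim that map is an isomorphism, and $\cok(\Omega g)$ need not have length one.

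The paper's proof aims at exactly the weaker statement and proceeds differently. First, irreducibility of $g$ yields a dichotomy: for any $f'\colon X\to Y'$ in $\CM(R)_0$, either $f'$ factors through $f$ or $f$ factors through $f'$ (a pushout argument). Second, one produces a map $f'\colon X\to\m^*$ that does \emph{not} factor through $f$: by Lemma~\ref{lem:counting-k-dim}, $\Omega g$ mono (hence not epi, as it is not an isomorphism) gives $\dim_k(Y\otimes_Rk)<\dim_k(X\otimes_Rk)+\dim_k(Z\otimes_Rk)$, so $\Hom_R(f,k)$ is not surjective; since $\Ext^1_R(-,R)$ vanishes on $\CM(R)$, the surjection $\m^*(a)\onto k$ from Lemma~\ref{lem:Rmk} transfers this to the failure of surjectivity of $\Hom_R(f,\m^*(a))$. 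The dichotomy then forces $f=jf'$, and $X$ embeds in $R$ because $\m^*$ does. Nothing about the rank of $K$ or the length of $\cok(\Omega g)$ is required; if you want to salvage your approach, you would need to replace the claimed isomorphism $X\cong\m^*$ by an argument of this kind.
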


\begin{proof} It is part of the general (Auslander-Reiten) folklore that if $f' \colon X \to Y'$ is any map in $\CM(R)_0$, then either $f$ factors through $f'$ or $f'$ factors through $f$ (in $\CM(R)_0$). To see this, assume that $f'$ does not factor through $f$. This says that the pushout of $\xymatrix {0 \ar[r] & X \ar[r]^f & Y \ar[r]^g & Z \ar[r] &0}$ by $f'$ does not split. Therefore, the irreducibility of  $g$ implies that the middle map in the diagram \begin{center}
$\xymatrix {0 \ar[r] & X \ar[d]_{f'} \ar[r]^f & Y  \ar[d] \ar[r]^g & Z \ar@{=}[d] \ar[r] &0\\
0 \ar[r] & Y' \ar[r] & W \ar[r] & Z \ar[r] &0}$ \end{center}
is a split monomorphism, and it follows that $f$ factors through $f'$.

  From the short exact sequence $0 \to R(a) \to \m^*(a) \to k \to 0$ (Lemma~\ref{lem:Rmk}), we obtain a commutative square \begin{equation} \label{contrary-epis} 
\xymatrix {\Hom_R(Y,\m^*(a)) \ar[r] \ar[d] & \Hom_R(Y,k) \ar[d]\\
\Hom_R(X,\m^*(a)) \ar[r] & \Hom_R(X,k)}\end{equation} 
where the horizontal maps are surjective since $\Ext^1_R(Y,R)=\Ext^1_R(X,R)=0$ (as $R$ is one-dimensional Gorenstein). From Lemma~\ref{lem:counting-k-dim} it follows that $\dim_k(Y \otimes_R k)< \dim_k(X \otimes_R k)+\dim_k(Z \otimes_R k)$, i.e.,  $\dim_k(\Hom_R(Y,k))< \dim_k(\Hom_R(X,k))+\dim_k(\Hom_R(Z,k))$, and therefore the map  $\Hom_R(f,k) \colon \Hom_R(Y,k) \to \Hom_R(X,k)$ is not an epimorphism. Then, in turn, the map  $\Hom_R(f,m^*(a)) \colon \Hom_R(Y,m^*(a)) \to \Hom_R(X,m^*(a))$ is not epi (because the horizontal maps in~\eqref{contrary-epis} \emph{are} epi).  Therefore, we may pick  $f' \in \Hom_R( X , \m^*)$ which does not factor through $f$. We may also assume $f$ is homogeneous of some degree, cf. \cite[Exercise 1.5.19 (f)]{BH}. It then follows from our first sentence in this proof that $f=jf'$ for some homogeneous $j \colon \m^* \to Y$. We know that $f' \colon X \to \m^*$ is mono, because $f$ is mono. Lastly, note that there exists a  monomorphism $X \into R$, because there exists a monomorphism  $\m^* \into R$ (since $\m^*$ is a finitely-generated submodule of $R[\text{nonzerodivisors}]^{-1}$).
\end{proof}

\begin{definition} \label{defn:perfect} If $X$ and $Y$ are modules in $\CM(R)$ having no free direct summands, we say that an irreducible map $g \colon X \to Y$ is \emph{eventually} $\Omega$-\emph{perfect} if either $\Omega^n g$ is epi for all large $n$, or $\Omega^n g$ is mono for all large $n$. \end{definition}

\begin{lemma} \label{lem:periodic-ideal} (cf. \cite[Proposition 2.4]{GZ}) Assume that $R$ is a complete intersection, and let $\xymatrix {0 \ar[r] &  X \ar[r] &  Y \ar[r]^-{g} &  Z \ar[r] &0}$ be a short exact sequence in $\CM(R)_0$, such that $g$ is irreducible and  not  eventually $\Omega$-perfect. Then, for all $n \ge 0$ such that  $\Omega^n g$ is epi and $\Omega^{n+1} g$ is mono, we have that $\ker(\Omega^n g)$ is isomorphic to a periodic ideal. \end{lemma}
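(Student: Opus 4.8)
The plan is to separate the conclusion ``$\ker(\Omega^n g)$ is an ideal'' from ``$\ker(\Omega^n g)$ is periodic'' and to prove the two by different means. Fix $n$ with $\Omega^n g$ epi and $\Omega^{n+1}g$ mono, and set $K_n=\ker(\Omega^n g)$. Since ``eventually $\Omega$-perfect'' is only defined for maps between modules with no free summand (Definition~\ref{defn:perfect}), the hypothesis forces $Y$ and $Z$ to have no free summand, so by Lemma~\ref{lem:syz-of-irreducible} every $\Omega^m g$ is irreducible. As $\Omega^n g$ is epi there is a short exact sequence $0\to K_n\to\Omega^n Y\to\Omega^n Z\to 0$ in $\CM(R)_0$ (the middle map being $\Omega^n g$), in which $K_n$ is maximal Cohen-Macaulay, being a submodule of $\Omega^n Y$ over a one-dimensional Cohen-Macaulay ring, and $K_n\neq 0$ since otherwise $\Omega^n g$, hence every later $\Omega^m g$, would be an isomorphism and $g$ would be eventually $\Omega$-perfect. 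Because $\Omega^n g$ is irreducible and $\Omega(\Omega^n g)=\Omega^{n+1}g$ is mono, Lemma~\ref{lem:A-is-an-ideal} applies and shows $K_n$ is isomorphic, up to a graded shift, to a graded ideal of $R$, so $\rank K_n\le 1$. I would also record that $K_n$ has no free direct summand: over a one-dimensional Gorenstein ring every syzygy of a maximal Cohen-Macaulay module has this property (the syzygy lies in $\m$ times a minimal cover, while $\Ext^1_R(-,R)$ vanishes on maximal Cohen-Macaulay modules, so every map from the syzygy to $R$ has image in $\m$); since $\Ext^1_R(\Omega^n Z,R)=0$, dualizing the displayed sequence and using the projectivity of $R$ shows a free summand of $K_n$ would produce one of $\Omega^n Y$, which is a syzygy of a maximal Cohen-Macaulay module (or is $Y$ itself). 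In particular $K_n$ is not free.

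It then remains to show $K_n$ is periodic, and for this I claim it suffices to prove $\operatorname{cx}(K_n)\le 1$. Over the complete intersection $R$, a module of complexity at most $1$ has an eventually periodic minimal free resolution of period dividing $2$ (Eisenbud over a hypersurface, Gasharov--Peeva in general); and since $K_n$ is maximal Cohen-Macaulay over the Gorenstein ring $R$, the syzygy functor is invertible on maximal Cohen-Macaulay modules modulo free summands, so an isomorphism $\Omega^m K_n\cong\Omega^{m+2}K_n$ valid for $m\gg 0$ yields $\Omega^2 K_n\cong K_n$ up to a graded shift, both sides being free-summand-free. The complexity is not $0$ because $K_n$ is not free. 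So the goal becomes: $\operatorname{cx}(K_n)\le 1$.

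I would establish this by using non-$\Omega$-perfectness quantitatively. Let $S$ (resp.\ $T$) be the set of $m$ with $\Omega^m g$ epi (resp.\ mono). By Lemma~\ref{irreducible-maps-are-cancellative} every $\Omega^m g$ lies in $S\cup T$; they are disjoint (a map that is both would be an isomorphism, which as above contradicts non-perfectness); and each is infinite (otherwise $g$ is eventually $\Omega$-perfect). Hence there are infinitely many indices $n=n_0<n_1<n_2<\cdots$ with $\Omega^{n_i}g$ epi and $\Omega^{n_i+1}g$ mono, so by the first paragraph each $K_{n_i}=\ker(\Omega^{n_i}g)$ is isomorphic to an ideal and $\rank K_{n_i}\le 1$. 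The crux is the identity, valid up to a graded shift,
\[ \Omega^{\,q-p}K_p\ \cong\ K_q\qquad\text{whenever }p\le q\text{ with }p,q\in S. \]
I would derive it by applying the horseshoe lemma to $0\to K_p\to\Omega^p Y\to\Omega^p Z\to 0$ and taking $(q-p)$-fold syzygies: this gives an exact sequence $0\to\Omega^{q-p}K_p\to\Omega^q Y\oplus F\to\Omega^q Z\to 0$ with $F$ free whose restriction to the $\Omega^q Y$ summand is a representative of $\Omega^q g$; since $q\in S$ that representative is already an epimorphism, and a short diagram chase splits the sequence so that $\Omega^{q-p}K_p\cong K_q\oplus F$, forcing $F=0$ because $\Omega^{q-p}K_p$ has no free summand (it is a syzygy of a maximal Cohen-Macaulay module when $q>p$; trivial when $q=p$). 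Taking $p=n$ and $q=n_i$ gives $\rank\Omega^{\,n_i-n}K_n=\rank K_{n_i}\le 1$ along the unbounded sequence $n_i-n$. On the other hand, over the complete intersection $R$ the Betti numbers of a nonfree maximal Cohen-Macaulay module $N$ grow polynomially in $m$ of degree $\operatorname{cx}(N)-1$; since $\rank\Omega^{m+1}N=b_m(N)-\rank\Omega^m N$, the generating function $\sum_m (\rank\Omega^m N)\,t^m$ equals $\bigl(\rank N+t\,P_N(t)\bigr)/(1+t)$, where $P_N(t)=\sum_m b_m(N)t^m$, and this still has a pole of order $\operatorname{cx}(N)$ at $t=1$, so $\rank\Omega^m N$ also grows polynomially of degree $\operatorname{cx}(N)-1$ and is unbounded once $\operatorname{cx}(N)\ge 2$. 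Applied to $N=K_n$ this forces $\operatorname{cx}(K_n)\le 1$, and the second paragraph then finishes the proof.

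The main obstacle I anticipate is making the boxed syzygy isomorphism precise: the horseshoe bookkeeping must be done carefully enough to identify the relevant map with a representative of $\Omega^q g$ in the sense of Notation~\ref{ntn:syz-of-a-map} and to match the graded shifts, and one must check that all modules occurring — syzygies of maximal Cohen-Macaulay modules over the one-dimensional Gorenstein ring $R$, and the kernels $\ker\Omega^m g$ for $m\in S$ (again by the dualizing argument of the first paragraph) — have no free direct summand, so that ``$K_m$'' is well-defined up to isomorphism independently of the chosen representative of $\Omega^m g$. The closing numerical step, together with the appeals to Lemmas~\ref{lem:A-is-an-ideal} and~\ref{irreducible-maps-are-cancellative} and to the Eisenbud / Gasharov--Peeva periodicity theorem, should then be routine.
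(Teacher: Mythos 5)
Your argument is essentially the paper's: the same horseshoe-lemma identification of $\ker(\Omega^n g)$ with a minimal syzygy of $X$ (the free summand vanishing because that syzygy has no free direct summand), the same appeal to Lemma~\ref{lem:A-is-an-ideal} at the infinitely many indices where an epi $\Omega^{n_i}g$ is followed by a mono $\Omega^{n_i+1}g$, and the same ``bounded Betti numbers along an infinite set of indices, hence complexity one, hence eventually periodic by Eisenbud, hence periodic since MCM over a Gorenstein ring'' conclusion --- you merely write out the complexity step that the paper delegates to a citation. The one small repair needed is that $R$ need not be a domain, so ``$\rank$'' should be replaced by the multiplicity $e(\,)$ (or the minimal number of generators, which it bounds for MCM modules in dimension one): $e(\,)$ is additive on short exact sequences in $\CM(R)$ and is at most $e(R)$ on ideals, so your growth estimate goes through verbatim.
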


\begin{proof} For each $n \ge 0$ we can apply the Horseshoe Lemma to obtain a short exact sequence $\xymatrix {0 \ar[r] & \Omega^n X \ar[r] ^-{[f_1,f_2]^T}& F^n \oplus \Omega^n Y \ar[r]^-{[\xi,\Omega^n g]} & \Omega^n Z \ar[r] &0}$ for some free module $F^n$, and $\xi \in \Hom_R(F^n, \Omega^n Z)_0$. If $\Omega^n g$ is surjective, then so is $f_1$ (Lemma~\ref{epi-pairs}) and this implies $F^n=0$, since  $\Omega^n X$ cannot have a free direct summand. So if  $\Omega^{n+1} g$ is mono while  $\Omega^n g$ is surjective, then  \begin{center}$\Omega^n X \cong \ker(\Omega^n g)$  is isomorphic to an ideal,\end{center} by Lemma~\ref{lem:A-is-an-ideal}. Since $\Omega^n X$ is thus isomorphic to an ideal for infinitely many $n \ge 0$, it follows that the free modules in a minimal resolution of $X$ have bounded rank (cf. \cite[Lemma 4.1.17]{THESIS}). Therefore $X$ is eventually periodic, by \cite[Theorem 4.1]{Eisenbud:1980}. As $X \in \CM(R)$ and $R$ is Gorenstein,  $X$ is in fact already periodic.
\end{proof}

\section{Huneke-Wiegand Conjecture} \label{sec:HW}

 Let $S=k[x_1, \dots, x_e]=\bigoplus_{i \ge 0} S_i$ be a graded polynomial ring over an infinite field $k$, where each $x_i$ is homogenenous (of any positive degree),  $k=S_0$, and $e \ge 2$. Let $f_1, \dots, f_{e-1} \in S$ be a regular sequence of homogeneous polynomials,  and let $R=S/(f_1,\dots, f_{e-1})$. We assume that $e=\edim R$, the embedding dimension of $R$ (i.e., each $f_i \in (x_1,\dots, x_e)^2)$).  Let $a_i=\deg x_i$, and $d_i=\deg f_i$. Then the $a$-invariant of $R$ is  (\cite[3.6.14-15]{BH}) \begin{equation}\label{a(R)-eqn} a(R)=\sum_{i=1}^{e-1} d_i -\sum_{i=1}^e a_i. \end{equation}  

\begin{definition} \label{defn:star} Let $d_{max}=\max\{d_1,\dots,d_{e-1}\}$. We will say that $R$ ``satisfies Condition (*)" if \begin{center} $a(R)>d_{max}/2$. \end{center} \end{definition}

\noindent In Section~\ref{sec:ns} we show that Condition (*) is satisfied by all  complete intersection numerical semigroup rings of embedding dimension at least three.

\begin{definition} We will call an indecomposable  module $M \in L_p(R)$ \emph{elevated} if  \begin{center} $\min\{i| (\tau M)_i \neq 0\} <\min\{i| M_i \neq 0\}$.
\end{center} (Recall that $\tau$ denotes the AR translate; see Definition~\ref{defn:tau} and Lemma~\ref{tau-formula}.)\end{definition}

Notice that if $M$ is elevated, then there exists no monomorphism $\tau M \into M$ in $\CM(R)_0$.

\begin{lemma}\label{tau-is-lower} Assume $R$ satisfies Condition (*), and let $M$ be an indecomposable nonfree module in $L_p(R)$. Then there exists $n_0$, depending on $M$, such that for all $n \ge n_0$, either $\Omega^n M$ is elevated or $\Omega^{n+1} M$ is elevated (or both).\end{lemma}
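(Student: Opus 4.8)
The plan is to track the bottom degree $b(N) := \min\{i \mid N_i \neq 0\}$ of a module $N$ as we pass to syzygies, and to relate $b(\Omega^n M)$ and $b(\tau\,\Omega^n M)$ using the formula $\tau\,\Omega^n M = \Omega^{n+1}M(a)$ from Lemma~\ref{tau-formula}. So $\Omega^n M$ is elevated precisely when $b(\Omega^{n+1} M) - a < b(\Omega^n M)$, i.e. when $b(\Omega^n M) - b(\Omega^{n+1}M) > -a = -a(R)$. Writing $\delta_n := b(\Omega^n M) - b(\Omega^{n+1} M)$, the claim ``$\Omega^n M$ or $\Omega^{n+1}M$ is elevated'' becomes ``$\delta_n > -a$ or $\delta_{n+1} > -a$''. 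So it suffices to show: for all large $n$, we cannot have both $\delta_n \le -a$ and $\delta_{n+1} \le -a$.

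First I would get control of how fast the bottom degree can drop in a single syzygy step. Since $R = S/(f_1,\dots,f_{e-1})$ is a complete intersection, the minimal free resolution of $M$ over $R$ is eventually given (via Eisenbud's theory of matrix factorizations / eventual periodicity over hypersurfaces, or more simply by the structure over a complete intersection) by matrices whose entries have degrees bounded in terms of the $d_i$ and $a_i$; concretely, one expects each step of the resolution to be a surjection $F_n \onto \Omega^{n-1}M$ whose kernel's generators sit, for $n$ large, within a degree window governed by $d_{\max}$. The key quantitative input I would extract is that for $n$ large, $-\delta_n = b(\Omega^{n+1}M) - b(\Omega^n M)$ is bounded below by something like $-d_{\max}$ and that consecutive drops cannot both be as large as possible: more precisely I would aim to show $\delta_n + \delta_{n+1} \ge -d_{\max}$ (or some inequality of this shape involving $d_{\max}$) for all large $n$. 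Granting this, if both $\delta_n \le -a$ and $\delta_{n+1}\le -a$ then $-d_{\max} \le \delta_n + \delta_{n+1} \le -2a$, so $2a \le d_{\max}$, contradicting Condition (*) which says $a(R) > d_{\max}/2$. This would finish the proof, with $n_0$ being the threshold beyond which the degree-window estimate on the resolution holds.

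The main obstacle will be step two: pinning down the right eventual estimate on the degrees appearing in the minimal free resolution of $M$, and in particular the claim that two consecutive syzygy steps cannot both lower the bottom degree by the full amount $a$ when $2a > d_{\max}$. The cleanest route is probably to invoke eventual $2$-periodicity of the resolution over the complete intersection (so that for $n \gg 0$, $\Omega^{n+2}M \cong \Omega^n M$ up to a fixed shift), which forces $\delta_n + \delta_{n+1}$ to equal that fixed shift for all large $n$; one then identifies this shift as a sum of two of the $d_i$'s minus contributions bounded appropriately, and the hypothesis $a > d_{\max}/2$ rules out both $\delta_n, \delta_{n+1} \le -a$. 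Alternatively, one can argue directly with the matrix factorization $(\varphi, \psi)$ of the last polynomial $f_{e-1}$ (or more carefully, reduce mod a regular sequence to the hypersurface case): the entries of $\varphi$ and $\psi$ have degrees summing to $d_{e-1}$ along the diagonal, so a bottom-degree drop of size $c$ at one step forces a drop of size at most $d_{e-1} - c \le d_{\max} - c$ at the next, which is exactly the inequality $\delta_n + \delta_{n+1} \ge -d_{\max}$ needed above. I would carry out whichever of these two formulations interacts most smoothly with the grading conventions already fixed in Section~\ref{sec:HW}, and take $n_0$ to be the point past which periodicity (resp. the matrix-factorization description) of the resolution kicks in.
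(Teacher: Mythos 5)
Your reduction is exactly the paper's: set $b(N)=\min\{i\mid N_i\neq 0\}$, use Lemma~\ref{tau-formula} to translate ``$\Omega^n M$ is elevated'' into $b(\Omega^{n+1}M)-b(\Omega^n M)<a(R)$, and observe that the two-step estimate $b(\Omega^{n+2}M)-b(\Omega^n M)\le d_{max}$ (for large $n$) forces one of the two one-step increments to be $\le d_{max}/2<a(R)$, which is Condition (*). The genuine gap is in how you propose to establish that two-step estimate. Eventual $2$-periodicity of the minimal free resolution fails over a complete intersection of codimension $\ge 2$: an indecomposable maximal Cohen--Macaulay module can have complexity up to $e-1$ with unbounded Betti numbers, so $\Omega^{n+2}M\cong\Omega^n M$ (up to shift) does not hold in general --- indeed the paper treats periodicity as a special circumstance in Lemma~\ref{lem:periodic-ideal}, not as the generic behavior. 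Your fallback, a single matrix factorization of $f_{e-1}$ over $S/(f_1,\dots,f_{e-2})$, is likewise unavailable unless $M$ has finite projective dimension over that intermediate ring, which is again the complexity-one case.

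The paper obtains the estimate from Eisenbud's cohomology operators instead: lifting the resolution to $S$ gives graded maps $t_j\colon \tilde F_{n+2}\to \tilde F_n$ of degree $-d_j$ with $\tilde{\partial}^2=\sum_j f_j t_j$, and the proof of \cite[Theorem 3.1]{Eisenbud:1980} supplies coefficients $g_j\in\hat R$ making $\hat t_1+\sum_{j\ge 2}g_j\hat t_j\colon \hat F_{n+2}\to\hat F_n$ \emph{surjective} for all large $n$. A surjection assembled from maps each of degree $\ge -d_{max}$ must reach the bottom degree of $F_n$, so $b(F_{n+2})\le b(F_n)+d_{max}$, i.e.\ $b(\Omega^{n+2}M)-b(\Omega^n M)\le d_{max}$. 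This input is valid for modules of arbitrary complexity and is precisely what your sketch is missing; without it, or some substitute of comparable strength, the argument does not close.
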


\begin{proof}  Let $({\bf F},\partial) \colon \dots \xymatrix{  \ar[r]^-\partial & F_1 \ar[r]^-\partial & F_0}$ be a graded $R$-free resolution of $M$.  Following Eisenbud's construction of cohomology operators \cite{Eisenbud:1980}, take a graded lifting $({\bf \tilde{F}},\tilde{\partial})$ of $({\bf F},\partial)$ to free $S$-modules, that is, $({\bf \tilde{F}},\tilde{\partial})$ is a graded sequence of free $S$-modules  such that $\tilde{\partial} \otimes_S R=\partial$. Then for $j \in \{1,\dots, e-1\}$, and all $n$, we can choose graded maps $t_j \colon \tilde{F}_{n+2} \to \tilde{F}_{n}$ such that \begin{center}$\tilde{\partial}^2=\sum_j f_j t_j$, and $\deg t_j =-d_j$ for each $j \in \{1,\dots, e-1\}$.\end{center} Now let $\hat{R}=\prod_{i\ge 0} R_i$ (the completion of $R$ with respect to $\m$) and consider the resolution of free $\hat{R}$-modules $({\bf \hat{F}},\hat{\partial})$ induced by $\partial$, as well as the maps $\hat{t_j}  \colon \hat{F}_{n+2} \to \hat{F}_{n}$ induced by the $t_j$'s. From the proof of \cite[Theorem 3.1]{Eisenbud:1980}, there exist $g_1,\dots, g_e \in \hat{R}$ such that $\hat{t_1}+\sum_{j=2}^{e-1} g_j \hat{t_j} \colon \hat{F}_{n+2} \to \hat{F}_{n}$ is an epimorphism for all large $n$. 

It follows that, for large $n$, $\min\{i| (F_{n+2})_i \neq 0\} - \min\{i| (F_{n})_i \neq 0\} \le d_{max}$. In other words,  $\min\{i| (\Omega^{n+2} M)_i \neq 0\}- \min\{i| (\Omega^{n}M)_i \neq 0\} \le d_{max}$, which implies that either $\min\{i| (\Omega^{n+2} M)_i \neq 0\} -\min\{i| (\Omega^{n+1} M)_i \neq 0\} \le d_{max}/2$ or $\min\{i| (\Omega^{n+1} M)_i \neq 0\} -\min\{i| (\Omega^{n} M)_i \neq 0\} \le d_{max}/2$. The result now follows from Condition (*), since $\tau^n M =\Omega^n M (n \cdot a(R))$, by Lemma~\ref{tau-formula}. \end{proof}

\begin{definition} \label{defn:alpha} Given an AR sequence $0 \to \tau M \to X \to M \to 0$, and writing $X=\bigoplus_i X^i$ as a direct sum of indecomposable modules $X^i$, define $\alpha(M)$ to be the number of (not necessarily nonisomorphic) summands $X^i$ which are nonfree. \end{definition}

\begin{remark} \label{rmk:broad-class} Let $X$ and $Y$ be nonfree  indecomposables in $L_p(R)$, such that $\alpha(X)$ and $\alpha(Y)$ are both equal to 1. For the sake of simplicity, assume also that $X$ is not a direct summand of $\m$ (so that there exists no irreducible map $X \to R$), and $Y$ is not a direct summand of $\m^*$  (so that there exists no irreducible map $R \to Y$). Then there exists no irreducible morphism $X \to Y$ (i.e., $X$ and $Y$ are not adjacent in the stable AR quiver). Indeed, any irreducible map $X \to X'$ is mono and any irreducible map $Y' \to Y$ is epi, by Lemma~\ref{every-irreducible-map}. Thus the  nonfree indecomposables $M \in L_p(R)$ having $\alpha(M) \ge 2$ form a broad class.
\end{remark}
\begin{lemma} \label{elevation-epi} Assume $R$ satisfies Condition (*), and let $M$ be a nonfree indecomposable in $L_p(R)$, with $\alpha(M) \ge 2$. Then there exists a nonfree indecomposable $X \in \CM(R)$ and   an irreducible morphism $p \colon \tau M \to X$ in $\CM(R)_0$, such that the set \begin{center}$\mathcal{N}: =\{n \ge 0| \Omega^n p  \text{ is epi and } \Omega^n  M \text { is elevated}\}$\end{center} is infinite. \end{lemma}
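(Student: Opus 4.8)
The plan is to take for $p$ one of the indecomposable components of the given AR sequence. Fix an AR sequence $0 \to \tau M \to X \to M \to 0$ in $\CM(R)_0$, with middle map $g\colon X\to M$, and write $X = X^1 \oplus \cdots \oplus X^\alpha \oplus F$ with each $X^i$ nonfree indecomposable, $F$ free, and $\alpha=\alpha(M)\ge 2$. Let $p^i\colon \tau M\to X^i$ denote the first map of the sequence followed by the projection onto $X^i$. By Lemma~\ref{every-irreducible-map} each $p^i$ is irreducible, hence by Lemma~\ref{irreducible-maps-are-cancellative} either a monomorphism or an epimorphism, and by Lemma~\ref{lem:syz-of-irreducible} the same is true of every choice of $\Omega^n p^i$ (here $\tau M$ and the $X^i$ have no free summands). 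First I would record that, by Lemma~\ref{tau-is-lower} --- the one place Condition (*) is used --- the set $\mathcal E=\{n\ge 0:\Omega^n M\text{ is elevated}\}$ meets every pair $\{n,n+1\}$ with $n$ large, hence is infinite.

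The crux is the claim that for each $n\in\mathcal E$, at least one $\Omega^n p^i$ is an epimorphism. Granting this, for each $n\in\mathcal E$ pick $i(n)$ with $\Omega^n p^{i(n)}$ epi; as there are only $\alpha$ possible values, some $i$ occurs for infinitely many $n\in\mathcal E$, and then $p:=p^i$ and $X:=X^i$ work, since $\{n\in\mathcal E: i(n)=i\}\subseteq\mathcal N$. To prove the claim I would pass to the $n$th syzygy. Since $R$ is Gorenstein, $\Omega$ is an auto-equivalence of the graded stable category, and it carries the AR sequence ending in $M$ to the AR sequence ending in $\Omega^n M$; concretely, the latter may be written $0\to\Omega^n\tau M \xrightarrow{\ \phi\ }\Omega^n X^1\oplus\cdots\oplus\Omega^n X^\alpha\oplus F'\xrightarrow{\ \psi\ }\Omega^n M\to 0$ with $F'$ free, the $\Omega^n X^i$ nonfree indecomposable, and the component $\phi^i$ of $\phi$ into $\Omega^n X^i$ a choice of $\Omega^n p^i$. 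Suppose, for contradiction, every $\Omega^n p^i$ is a monomorphism. Applying Lemma~\ref{epi-pairs} (the monomorphism half) to this AR sequence with the grouping $\Omega^n X^1$ versus $\Omega^n X^2\oplus\cdots\oplus\Omega^n X^\alpha\oplus F'$: since $\phi^1=\Omega^n p^1$ is mono, the restriction of $\psi$ to $\Omega^n X^2\oplus\cdots\oplus F'$ is mono, and restricting once more, its component $\psi^2\colon\Omega^n X^2\to\Omega^n M$ is mono. Then $\psi^2\circ\phi^2\colon\Omega^n\tau M\to\Omega^n M$ is a nonzero degree-zero monomorphism. But $\Omega^n M\in\mathcal E$ is elevated, so by the observation following the definition of ``elevated'' together with $\tau(\Omega^n M)=\Omega^n\tau M$ (Lemma~\ref{tau-formula}), no such monomorphism exists --- contradiction. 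Note this used $\alpha\ge 2$ to invoke the two distinct indices $1$ and $2$.

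I expect the main obstacle to be the bookkeeping just sketched, namely the precise identification of $\Omega^n$ of the AR sequence ending in $M$ with the AR sequence ending in $\Omega^n M$: one needs that $\Omega$ preserves indecomposability and non-freeness of each $X^i$, so that $\alpha(\Omega^n M)=\alpha(M)$ and the displayed decomposition is valid; that the component $\phi^i$ is genuinely a choice of $\Omega^n p^i$ under this identification; and that whether a choice of $\Omega^n p^i$ is mono or epi is independent of the choice --- two choices differ by a map through a free module, which by Lemma~\ref{stablezero-in-radical} and Nakayama preserves surjectivity, and preserves injectivity too since an irreducible map is mono-or-epi. The extra free summand $F'$ in the middle term causes no trouble: it is absorbed into the ``$\Omega^n X^2\oplus\cdots$'' block in the application of Lemma~\ref{epi-pairs} and plays no role in the composite $\psi^2\circ\phi^2$. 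Everything else is a direct appeal to the lemmas of Section~\ref{sec:AR}.
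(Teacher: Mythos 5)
Your proposal is correct and follows essentially the same route as the paper: choose $p$ among the nonfree components of the AR sequence ending in $M$, use Lemma~\ref{tau-is-lower} to get infinitely many elevated syzygies, derive a contradiction from ``all components mono'' by producing (via Lemma~\ref{epi-pairs}) a degree-zero monomorphism $\tau\Omega^nM\to\Omega^nM$, and finish by pigeonhole. The only cosmetic differences are that you work with $\Omega^n$ rather than $\tau^n$ and that your application of Lemma~\ref{epi-pairs} sidesteps the paper's auxiliary observation that the component into the free summand is mono.
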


\begin{proof} Let $\xymatrix@C+1.5pc{ 0 \ar[r] & \tau M \ar[r]^-{[f_1,\dots,f_{l+1}]^T} & \bigoplus_{i=1}^l X^i \oplus F \ar[r]^-{[g_1,\dots,g_{l+1}] } & M \ar[r]& 0}$ be the AR sequence in $\CM(R)_0$ ending in $M$, where $X^1 , X^2, \dots, X^l$ are nonfree indecomposables in $L_p(R)$, and $F$ is a (possibly zero) free module.  Then it follows from  Lemmas~\ref{every-irreducible-map} and~\ref{lem:syz-of-irreducible} that the AR sequence ending in $\tau^nM$ has the form  \begin{equation}\label{eqn:f'g'} \xymatrix{ 0 \ar[r] & \tau^{n+1} M \ar[r]^-{f'} & \bigoplus_{i=1}^l \tau^n(X^i) \oplus F' \ar[r]^-{g'} & \tau^n M \ar[r]& 0}\end{equation}
 where $F'$ is free and $f'=[\tau^n f_1,\dots, \tau^n f_l, \xi]^T$, for some $\xi \colon \tau^{n+1} M \to F'$. 

Certainly $\tau^n M$ is elevated for infinitely many $n \ge 0$, by Lemma~\ref{tau-is-lower}. We claim that for each such $n$, $\tau^n f_i$ is epi for  some $i \in \{1,\dots, l\}$. Note that, given the AR sequence~\eqref{eqn:f'g'}, the map $\xi \colon \tau^n M \to F'$ must be mono, since otherwise it would be epi (Lemma~\ref{irreducible-maps-are-cancellative}) and therefore  split, contradicting the fact that it is irreducible. So if we write $g'=[g'_1,\dots, g'_{l+1}]$, and if  $\tau^n f_i$ is mono for each $i\in \{1,\dots, l\}$, then it follows from Lemma~\ref{epi-pairs} that each of $g'_1,\dots g'_{l+1}$ is mono. Then $g'_1 \circ ( \tau^n f_1)$, for example, is a degree zero monomorphism, which contradicts that $\tau^nM$ is elevated. Thus the claim is thus proved, and the lemma follows (by applying a version of the ``pigeonhole principle").
 \end{proof}

\begin{lemma}\label{why-edim-ge-3} If $\edim R \ge 3$, then $\m^*$ is not periodic.\end{lemma}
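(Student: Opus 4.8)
The plan is to argue by contradiction: suppose $\m^*$ is periodic, so $\Omega^2 \m^* \cong \m^*$ up to a graded shift. The key structural input is Lemma~\ref{lem:Rmk}, which gives a short exact sequence $0 \to R(a) \to \m^*(a) \to k \to 0$; equivalently, applying $\Omega$, we get that $\Omega k \cong \m(a)$ up to shift, and dualizing over $R$ (using that $(\,)^*$ is a duality on $\CM(R)$ and that $R$ is Gorenstein) relates $\Omega\m^*$ and $\Omega^2 \m^*$ to syzygies and cosyzygies of $k$. More precisely, over a Gorenstein ring of dimension one, $\m^* \cong \cosyz(k)$ up to shift, so periodicity of $\m^*$ would force $k$ to have a (eventually) periodic resolution in both directions, hence a periodic resolution.

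The main step is then to extract a numerical contradiction from $\edim R \ge 3$. If $\Omega^2\m^* \cong \m^*(b)$ for some $b$, then the minimal free resolution of $\m^*$ — equivalently, of $k$ shifted into the appropriate homological degree — has free modules of constant rank. But the minimal free resolution of the residue field $k$ of $R=S/(f_1,\dots,f_{e-1})$ has Betti numbers $\beta_i(k) = \binom{e-1}{?}\cdots$ growing like the Betti numbers of a complete intersection of codimension $e-1$: specifically $\sum \beta_i(k) t^i = (1+t)^e/\prod_{j=1}^{e-1}(1 - ?t^2)$ after accounting for the $e$ variables and $e-1$ relations, so $\beta_i(k)$ is a polynomial in $i$ of degree $e-2 \ge 1$ when $e \ge 3$, hence unbounded. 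Since $\m^*$ is a syzygy of $k$ (up to shift), its Betti numbers are a tail of those of $k$ and are therefore also unbounded, contradicting periodicity (which forces bounded Betti numbers). I would phrase this cleanly using complexity: $k$ has complexity $e-1 \ge 2$ over the complete intersection $R$, a periodic module has complexity $\le 1$, and complexity is preserved under taking syzygies, so $\m^*$ cannot be periodic.

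The step I expect to be the main obstacle — or at least the one needing care — is the bookkeeping that legitimately identifies $\m^*$ with a syzygy (or cosyzygy) of $k$ so that the complexity comparison applies: one must check that $\m^*$ has no free direct summand (so that periodicity is not vacuously violated) and handle graded shifts, but this is exactly what Lemma~\ref{lem:Rmk} delivers, since the sequence $0 \to R(a) \to \m^*(a) \to k \to 0$ exhibits $R(a)$ as (isomorphic to) $\Omega k$ only after checking minimality — and $\m^*(a)$ surjects minimally onto $k$ precisely because $\m^* \subseteq R \cdot(\text{nonzerodivisors})^{-1}$ has all generators in nonnegative degree with none a unit. Once that identification is in hand, the conclusion is immediate from the standard theory of complexity over complete intersections (Gulliksen, Avramov--Buchweitz, Eisenbud), together with the fact that over a Gorenstein ring periodic modules have bounded Betti numbers. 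So the write-up reduces to: (1) recall $\m^* \sim \cosyz k$ up to shift; (2) invoke $\mathrm{cx}_R k = \operatorname{codim} R = e - 1$; (3) note $\mathrm{cx}$ is syzygy-invariant and a periodic nonfree module has $\mathrm{cx} = 1$; (4) conclude $e - 1 = 1$, contradicting $e \ge 3$.
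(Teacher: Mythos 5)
Your argument is correct and is essentially the content of the citation the paper uses (the paper's ``proof'' is just a reference to Avramov's Theorem 8.1.2 on complexity over complete intersections): $k$ has complexity $e-1\ge 2$, a periodic module has complexity at most $1$, and $\m^*$ inherits the complexity of $k$. The bookkeeping you worry about can be short-circuited: the sequence $0\to R(a)\to \m^*(a)\to k\to 0$ of Lemma~\ref{lem:Rmk} has free first term, so the long exact sequence gives $\Ext^i_R(\m^*(a),k)\cong\Ext^i_R(k,k)$ for $i\ge 2$; hence $\m^*$ and $k$ have the same (unbounded, since $\beta_i(k)$ grows polynomially of degree $e-2\ge 1$) Betti numbers from degree $2$ on, with no need to identify $\m^*$ with a cosyzygy of $k$ or to check minimality.
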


\begin{proof} See  \cite[Theorem 8.1.2]{Avramov:6lectures}. \end{proof}

\begin{theorem}\label{main-thm}  Assume that either $R$ is isomorphic to a numerical semigroup ring $k[t^{a_1},\dots, t^{a_e}]$, or that $\edim R \ge 3$ and $R$ satisfies Condition (*). If $M \in L_p(R)$ is a nonfree indecomposable  with $\alpha(M) \ge 2$, then $\stHom_R(\tau M,M)_0 \neq 0$, and $M$ satisfies the Huneke-Wiegand Conjecture. \end{theorem}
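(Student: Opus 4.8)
The plan is to apply Lemma~\ref{elevation-epi} to produce a nonfree indecomposable $X \in \CM(R)$ and an irreducible map $p \colon \tau M \to X$ in $\CM(R)_0$ such that $\mathcal{N} = \{n \ge 0 \mid \Omega^n p \text{ is epi and } \Omega^n M \text{ is elevated}\}$ is infinite, and then to leverage ``elevatedness'' to force $\Omega^{n+1} p$ to be mono for infinitely many $n \in \mathcal{N}$. More precisely: since $\Omega^n M$ elevated means there is no degree zero monomorphism $\Omega^n M \to \Omega^n M$ (equivalently, in $\tau$-shifted form, no such monomorphism $\tau^{n+1}M \into \tau^n M$), and $p$ is part of the AR sequence structure for $\tau M$, I want to argue that along the infinite set $\mathcal{N}$ the map $\Omega^n p$ cannot be eventually $\Omega$-perfect in the ``epi'' direction forever without contradiction — so $p$ is not eventually $\Omega$-perfect, and Lemma~\ref{lem:periodic-ideal} applies: for each $n$ with $\Omega^n p$ epi and $\Omega^{n+1}p$ mono, $\ker(\Omega^n p)$ is isomorphic to a periodic ideal.

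Next I would analyze the kernel. Using Lemma~\ref{lem:A-is-an-ideal} (whose hypotheses are met: $\Omega^n p$ irreducible by Lemma~\ref{lem:syz-of-irreducible}, and $\Omega^{n+1}p$ mono), $\ker(\Omega^n p) = \Omega^{n}(\tau M)$ embeds into $\m^*$ (up to shift), factoring through the universal map $\m^* \to \Omega^n X$; combined with periodicity, this should force $\Omega^n(\tau M)$ — hence $\m^*$ itself, or a direct summand of $\m^*$ — to be periodic, or more carefully, to identify $\ker(\Omega^n p)$ with a periodic graded ideal that must be $\m^*$ up to shift when $\edim R \ge 3$. Here is where the two hypotheses of the theorem split: if $\edim R \ge 3$, Lemma~\ref{why-edim-ge-3} says $\m^*$ is not periodic, which gives the contradiction; if $R = k[t^{a_1},\dots,t^{a_e}]$ is a numerical semigroup ring, I would invoke the (separately established, via Section~\ref{sec:ns}) fact that such rings either have $\edim \ge 3$ and satisfy Condition~(*), or are hypersurfaces $k[t^{a_1},t^{a_2}]$ where $\m^*$ periodic is handled directly — in the hypersurface case one checks the Huneke-Wiegand conclusion by hand or observes $\alpha(M) \ge 2$ cannot occur. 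The upshot of the contradiction is that $p$ \emph{must} be eventually $\Omega$-perfect after all, and since $\Omega^n p$ is epi infinitely often (on $\mathcal{N}$), $\Omega^n p$ is epi for \emph{all} large $n$.

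Now to extract $\stHom_R(\tau M, M)_0 \neq 0$: since $p \colon \tau M \to X$ is irreducible, by Lemma~\ref{every-irreducible-map} there is a split epimorphism $q \colon (\text{middle term of AR sequence ending in } M) \to X$ with $p = q f$, where $f \colon \tau M \to \bigoplus X^i \oplus F$ is the AR inclusion; composing with the section $X \into \bigoplus X^i \oplus F$ and then with $g \colon \bigoplus X^i \oplus F \to M$ produces a map $\tau M \to M$. I want this composite, call it $h$, to be nonzero in $\stHom_R(\tau M, M)_0$. If it were zero there, $h$ factors through a free module, so by Lemma~\ref{stablezero-in-radical} (applicable since $\tau M$ has no free summand) $h(\tau M) \subseteq \m M$; applying $\Omega$ and using that $\Omega^n p$ is epi for all large $n$ together with Lemma~\ref{lem:counting-k-dim}, I would derive a numerical contradiction on $k$-dimensions, showing $h$ is a nonzero stable map. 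Finally, by Lemma~\ref{tau-formula} $\tau M = \Omega M(a)$, so $\stHom_R(\Omega M, M) \neq 0$, and by Lemma~\ref{lem:in-turn} $\Ext^1_R(M,M) \cong \stHom_R(\Omega M, M) \neq 0$, which is precisely the statement that $M$ satisfies the Huneke-Wiegand Conjecture.

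\textbf{Main obstacle.} I expect the crux to be the middle step: converting ``$\Omega^n p$ is epi but $\Omega^{n+1}p$ is mono for infinitely many $n$'' (the non-eventually-$\Omega$-perfect scenario) into a genuine contradiction with non-periodicity of $\m^*$. The delicate points are (i) verifying that elevatedness of $\Omega^n M$ actually forbids $\Omega^{n+1}p$ from being epi — i.e., that the ``other'' irreducible components in the AR sequence~\eqref{eqn:f'g'} would then all have to be mono, forcing a degree zero monomorphism contradicting elevatedness, an argument parallel to the proof of Lemma~\ref{elevation-epi} but one syzygy further along; and (ii) pinning down that the periodic ideal $\ker(\Omega^n p)$ is forced to be (a summand of) $\m^*$ up to shift rather than some other periodic module, which is what lets Lemma~\ref{why-edim-ge-3} bite. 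Managing the bookkeeping of graded shifts through all the $\Omega$'s and $\tau$'s, and ensuring the numerical/dimension inequalities in the last step go the right way, will require care but should be routine once the structural picture is set.
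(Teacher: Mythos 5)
Your skeleton matches the paper's at the top level (invoke Lemma~\ref{elevation-epi}, split on whether $p$ is eventually $\Omega$-perfect, use Lemmas~\ref{lem:periodic-ideal}, \ref{lem:A-is-an-ideal} and \ref{why-edim-ge-3} in the non-perfect case), but the two steps you yourself flag as delicate are exactly where your mechanism breaks, and in both places the paper does something genuinely different. First, the claim that the periodic ideal $K=\ker(\Omega^n p)$ ``must be $\m^*$ up to shift'' is unjustified and false in general: Lemma~\ref{lem:A-is-an-ideal} only gives a monomorphism $\kappa'\colon K \to \m^*$, and periodic ideals need not be isomorphic to $\m^*$ (on a hypersurface every MCM module is periodic; on higher-codimension complete intersections, modules of complexity one still exist). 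So periodicity of $K$ does not collide head-on with Lemma~\ref{why-edim-ge-3}. The paper instead uses periodicity of $K$ versus non-periodicity of $\m^*$ only to conclude that $\kappa'$ is not a split epimorphism --- via the observation that $K$ is an $\End_R(\m)=\m^*$-module, so $1\in\kappa'(K)$ would force a splitting --- hence $1\notin\kappa'(K)$. Since $(\m^*)_i=0$ for $i<0$ and $(\m^*)_0=k$ (here Condition (*) enters, to get $a(R)>0$), this says $\kappa(K)$ misses the bottom degree of $\tau^{n+1}M$, so $\tau^n p$ is injective on that bottom degree; combined with an irreducible \emph{monomorphism} $\tau^n X \to \tau^n M$ this contradicts elevatedness of $\tau^n M$. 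Note that getting that irreducible map to be mono rather than epi itself uses the standing assumption $\stHom_R(\tau M,M)_0=0$ (if it were epi, composing with the epi $\tau^n p$ would already produce the desired nonzero stable map, by Lemma~\ref{stablezero-in-radical}). Your direct-proof framing never surfaces this dichotomy, and the conclusion of this step is in any case that $\tau^{n+1}p$ is \emph{epi} for large $n\in\mathcal{N}$ --- not mono, as your first paragraph aims for.

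Second, the endgame is a genuine gap: knowing $\Omega^n p$ is epi for all large $n$ does not yield $\stHom_R(\tau M,M)_0\neq 0$ by a quick $k$-dimension count. If $h=g_1p$ is stably zero you only get $g_1(X)\subseteq \m M$, hence $g_1$ mono; there is no numerical contradiction at that point. The paper needs substantially more: after shifting so that $p$ is epi \emph{and} $\tau^{-1}X$ is elevated, it takes the full AR sequence $0\to X\to M\oplus Z\to \tau^{-1}X\to 0$, shows $f_1(X)\subseteq\m M$ (again from the contradiction hypothesis), uses a multiplicity comparison $e(\tau M)>e(X)$ to get $\Omega g$ epi, applies Lemma~\ref{lem:counting-k-dim} to find a bottom-degree element $w\in X$ with $f(w)\notin\m(M\oplus Z)$, forces $f_2(w)\neq 0$ and $g_2$ mono, and finally contradicts elevatedness of $\tau^{-1}X$. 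The second summand $Z$, the elevatedness of $\tau^{-1}X$, and the multiplicity argument are all essential and absent from your sketch. (Your handling of the hypersurface numerical semigroup case is fine in spirit; the paper simply cites \cite[Theorem 3.7]{Huneke1994} for domains of embedding dimension at most two.)
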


\begin{proof}
If $\edim R <3$, and $R$ is a domain (for example, a numerical semigroup ring), then $M$ satisfies the Huneke-Wiegand Conjecture by \cite[Theorem 3.7]{Huneke1994}. If $R$ is a numerical semigroup ring with  $\edim R \ge 3$, then we will see in Proposition \ref{prop:ns} that $R$ satisfies Condition (*). So for this proof, assume that $\edim R \ge 3$ and $R$ satisfies Condition (*). Now assume,   to the contrary, that $\stHom_R(\tau M,M)_0 =0$. Take $p \colon \tau M \to X$ and $\mathcal{N}$ as in Lemma~\ref{elevation-epi}.

First we set about proving the following claim: $\tau^{n+1} p$ is epi for all sufficiently large $n \in \mathcal{N}$. 

 If $p$ is eventually $\Omega$-perfect then the claim holds (by choice of $p$), so assume otherwise. Let $n \in \mathcal{N}$, i.e.,  $\tau^n p \colon \tau^{n+1} M \to \tau^n X$ is epi and $\tau^n M$ is elevated. Let us first observe that: \begin{equation}\label{eqn:first-observe} \text{There exists an irreducible mono }\iota \colon \tau^n X \to \tau^n M \text{ in } \CM(R)_0.\end{equation} Indeed, there exists an irreducible map $\tau^n X \to \tau^n M$ by Lemma~\ref{every-irreducible-map}, and if it were epi then the composition with the epimorphism $\tau^n p$ would (by Lemma~\ref{stablezero-in-radical}) give a nonzero element of $\stHom_R(\tau^{n+1} M, \tau^n M)_0 \cong \stHom_R(\tau M,M)_0$. (Indeed,  $\stHom_R(\Omega^{n+1} M, \Omega^n M) \cong \stHom_R(\Omega M,M)$ for all $n \in \mathbb{Z}$ since $R$ is Gorenstein.) 

Now, suppose $\tau^{n+1} p$ is mono.  Letting $K=\ker ( \tau^{n} p)$, we know $K$ is isomorphic to a periodic ideal, by Lemma~\ref{lem:periodic-ideal}. Letting $\kappa$ denote the inclusion map  $\kappa \colon K \into \tau^{n+1} M$,  we have $\kappa=\kappa'' \kappa'$ for some graded maps $\kappa' \colon K \to \m^*$ and $\kappa'' \colon \m^* \to \tau^{n+1} M$, by Lemma~\ref{lem:A-is-an-ideal}. Note that $K \cong K^{**} \cong \Hom_R(K^*,\m)$ since $K$ lies in $ \CM(R)$ and has no free direct summand. This implies that $K$ is a module over $\End_R \m =\m^*$. So if $1 \in \kappa'(K)$, then $\kappa'$ must be a split epi, contradicting that $K$ is periodic and $\m^*$ is not (Lemma~\ref{why-edim-ge-3}). So $1 \notin \kappa'(K)$, while on the other hand, $\kappa''(1)$ must be nonzero since otherwise $\kappa''$ would be zero. 

Let us observe that the latter sentence implies that $\kappa(K) \cap (\tau^{n+1} M)_{i_0}=0$ where $i_0=\min \{i|  (\tau^{n+1} M)_i \neq 0\}$. To check this, note that $a(R)$ is positive by Condition (*), and therefore by the short exact sequence $0\to R \to \m^* \to k(-a) \to 0$ (Lemma~\ref{lem:Rmk}), we have that $(\m^*)_0=R_0$ is a one-dimensional $k$-vector space, and $(\m^*)_i=R_i=0$ for all $i <0$. So indeed we see that since the image of $K$ does not touch the lowest degree of $\m^*$, and the lowest degree of $\m^*$ does not go to zero in $\tau^{n+1} M$, therefore $\kappa (K)$ does not touch the lowest degree of $\tau^{n+1} M$. 

Therefore $\tau^n p$ restricted to $(\tau^{n+1} M)_{i_0}$ is injective. But together with~\eqref{eqn:first-observe}, this implies $(\tau^n M)_{i_0} \neq 0$, and this is a contradiction since $\tau^n M$ is elevated, by definition of $\mathcal{N}$. So the claim, that  $\tau^{n+1} p$ is epi for all sufficiently large $n \in \mathcal{N}$, is proved.

 Therefore, using  Lemma~\ref{tau-is-lower} and the fact that   $\stHom_R(\tau M,M) \cong \stHom_R(\tau^{n+1} M, \tau^n M)$ for all $n \in \mathbb{Z}$, we may assume  that $p \colon \tau M \to X$ is itself epi and that  $\tau^{-1} X$ is elevated. 

 By   Lemma~\ref{every-irreducible-map}, the AR sequence beginning with $X$  has the form \begin{center}$\xymatrix@C+1.5pc{ 0 \ar[r] & X \ar[r]^-{f=[f_1, f_2]^T} & M \oplus Z \ar[r]^{g=[g_1, g_2]} & \tau^{-1} X \ar[r]& 0}$, \end{center} where $Z$ may be zero. Moreover, $f_1(X) \subseteq \m M$, since otherwise $f_1 p$ gives a nonzero element of $\stHom_R(\tau M, M)_0$, by Lemma~\ref{stablezero-in-radical}. In particular, $f_1$ is mono by Lemma~\ref{irreducible-maps-are-cancellative}.
The surjectivity of $p$ also gives $e(\tau M) > e(X)$, where $e(\,)$ denotes multiplicity, $e(N):=\lim_{n \rightarrow \infty} \frac{1}{n} \length(N/\m^n N)$. (Recall that $e(\,)$ is additive along short exact sequences, and positive on $\CM(R)$.) Therefore, $\tau g_1 \colon \tau M \to X$ is also epi, by Lemma~\ref{irreducible-maps-are-cancellative}. This implies that $\Omega g$ is epi. 

Now let $0 \neq w \in X_{j_0}$ where $j_0=\min\{j \in \mathbb{Z}| X_j \neq 0\}$. Then $f(w) \notin \m M \oplus \m Z$, by Lemma~\ref{lem:counting-k-dim}, and therefore $f_2(w) \neq 0$, since $f_1(X) \subseteq \m M$. Meanwhile   $g_2$ is mono by Lemma~\ref{epi-pairs}, and so $g_2f_2(w) \neq 0$. This contradicts the assumption that $\tau^{-1} X$ is elevated. So the assertion $\stHom_R(\tau M,M)_0 \neq 0$ is proved. Now by Lemma~\ref{lem:in-turn}, $\Ext_R^1(M,M) \neq 0$, i.e. $M$ satisfies the Huneke-Wiegand Conjecture.
\end{proof}

\section{Numerical Semigroups} \label{sec:ns}

The purpose of this section is to prove Proposition~\ref{prop:ns}.

\begin{definition} Let $\mathbb{ N}$ denote the natural numbers $\{0,1,2,\dots\}$. A \emph{numerical semigroup} is a subset $S$ of $\NN$ such that $0 \in S$, $s+s' \in S$ whenever $s$ and $s'$ are in $S$, and $\NN \setminus S$ is finite. The numerical semigroup generated by positive integers $a_1,\dots, a_e$ (such that $\gcd(a_1,\dots, a_e)=1$) is the set $\{n_1a_1+\dots+n_ea_e|n_1,\dots,n_e \in \mathbb{N}\}=\NN a_1+\dots \NN a_e$. If $S= \NN a_1+\dots \NN a_e$, $k[S]$ will denote the graded $k$-algebra $k[t^{a_1},\dots, {t^{a_e}}] \subseteq k[t]$ where $\deg t=1$. The \emph{Frobenius} of a numerical semigroup $S$ is $\max \{n \in \mathbb{Z}| n \notin S\}$, and we denote this integer by $F(S)$. A numerical semigroup $S$ is said to be \emph{complete intersection} if $k[S]$ is a complete intersection. In this case $F(S)=a(k[S])$ (see, e.g., \cite[Proposition 2.3.3]{THESIS}), so in particular we can use formula~\eqref{a(R)-eqn} to find $F(S)$. \end{definition}

\begin{definition} Consider a numerical semigroup $S$ which is minimally generated by $a_1, \dots, a_e$. We may take an additive surjection from the free monoid  $\Free(Y_1,\dots,Y_e)=\{n_1 Y_1 + \dots + n_e Y_e| n_1,\dots,n_e \in \NN\}$, $\varphi \colon \Free(Y_1, \dots Y_e) \to S$ sending $Y_i$ to $a_i$.  A \emph{minimal presentation} for $S$ is a set $\rho \subset \Free(Y_1,\dots,Y_e) \times \Free(Y_1,\dots,Y_e)$ which generates the kernel congruence $\{(u,v) \in \Free(Y_1,\dots,Y_e)  | \varphi(u)=\varphi(v)\}$; see \cite[Chapter 7]{Rosales-Garcia-Sanchez}. For $r=(r_1,r_2) \in \rho$, we will use the notation $|r|=\varphi(r_1)=\varphi(r_2)$. The numbers $\{|r|\}_{r \in \rho}$ are the same as the $\{d_i\}$ from Section~\ref{sec:HW}, when $R \cong k[S]$.\end{definition}

\begin{definition} For a complete intersection numerical semigroup $S$, we will abuse notation slightly by saying that ``$S$ satisfies Condition (*)'' if $k[S]$ satisfies Condition (*). Instead of writing $a(R) > d_{max}/2$, we may alternatively write
 \begin{center}$F(S) > |r|/2$ for all $r \in \rho$, \end{center} where $F(S)$ is the Frobenius of $S$ and $\rho$ is a minimal presentation of $S$. 
 \end{definition}

\begin{definition}\label{defn:glue} \cite[Ch. 8, \S 3]{Rosales-Garcia-Sanchez} Let $S$ and $S'$ be two numerical semigroups minimally generated by $\{a_1,\dots, a_e\}$ and $\{a'_1,\dots, a'_{e'}\}$, respectively. Let $\lambda \in S \setminus \{a_1,\dots, a_e\}$ and $\mu \in S' \setminus \{a'_1,\dots, a'_{e'}\}$ be such that $\gcd(\lambda, \mu)=1$. Then $S''=\NN \mu a_1+ \dots + \NN  \mu a_e + \NN \lambda a'_1+ \dots + \NN \lambda a'_{e'}$ is called a \emph{gluing} of $S$ and $S'$. \end{definition} 

\begin{lemma} \label{min-gens-of-gluing} In the notation of Definition~\ref{defn:glue}, $S''$ is minimally generated by $\{\mu a_1,\dots, \mu a_e, \lambda a'_1, \dots, \lambda a'_{e'}\}$, and we have $\lambda \ge 2$ and $\mu \ge 2$. \end{lemma}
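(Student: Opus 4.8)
The plan is to prove the two assertions in turn, reducing everything to the standard fact that each minimal generator $b_k$ of a numerical semigroup with minimal generating set $\{b_1,\dots,b_r\}$ is \emph{indecomposable}: it admits no expression $b_k=\sum_i n_i b_i$ with $n_i\in\NN$ and $\sum_i n_i\ge 2$. A consequence used repeatedly below: an element of $S$ that is neither $0$ nor one of $a_1,\dots,a_e$ --- such as our chosen $\lambda$ --- has every expression $\lambda=\sum_i n_i a_i$ ($n_i\in\NN$) of total weight $\sum_i n_i\ge 2$; symmetrically for $\mu$ over $S'$.

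First I would dispose of $\lambda\ge 2$ and $\mu\ge 2$. Since $\gcd(\lambda,\mu)=1$, the case $\lambda=0$ forces $\mu=1$, hence $S'=\NN$ and hence $\mu=1\in\{a'_1,\dots,a'_{e'}\}$, contrary to $\mu\in S'\setminus\{a'_1,\dots,a'_{e'}\}$; and $\lambda=1$ forces $S=\NN$, so $\lambda=1$ is itself a minimal generator of $S$, contrary to $\lambda\in S\setminus\{a_1,\dots,a_e\}$. Thus $\lambda\ge 2$, and the symmetric argument --- the entire hypothesis is symmetric under $(S,\{a_i\},\lambda)\leftrightarrow(S',\{a'_j\},\mu)$ --- gives $\mu\ge 2$.

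Next, the minimality of $G=\{\mu a_1,\dots,\mu a_e,\lambda a'_1,\dots,\lambda a'_{e'}\}$: it generates $S''$ by definition, so it remains to check that no member of $G$ lies in the subsemigroup generated by the others, and by the symmetry just noted it suffices to treat $\mu a_k$. Suppose $\mu a_k=\sum_{i\neq k}c_i\mu a_i+\sum_j d_j\lambda a'_j$ with $c_i,d_j\in\NN$; put $u=\sum_{i\neq k}c_i a_i\in S$ and $v=\sum_j d_j a'_j\in S'$, so that $\lambda v=\mu(a_k-u)$. The left side is $\ge 0$, so $a_k\ge u$; and $\mu\mid\lambda v$ with $\gcd(\mu,\lambda)=1$ gives $\mu\mid v$, say $v=\mu w$ with $w\in\NN$, whence $a_k=u+\lambda w$. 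If $w=0$, then $v=0$, all $d_j=0$, and $a_k=u=\sum_{i\neq k}c_i a_i$ contradicts minimality of $\{a_1,\dots,a_e\}$ (with $a_k>0$ handling the degenerate case $e=1$). If $w\ge 1$, write $\lambda=\sum_i n_i a_i$ with $\sum_i n_i\ge 2$; then $a_k=u+\lambda w$ expresses $a_k$ in the generators $a_1,\dots,a_e$ with total weight at least $w\sum_i n_i\ge 2$, contradicting the indecomposability of $a_k$. Hence $\mu a_k\notin\langle G\setminus\{\mu a_k\}\rangle$; the symmetric argument does the same for each $\lambda a'_j$ --- in particular the listed elements are pairwise distinct --- so $G$ is a minimal generating set of $S''$.

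The only step calling for genuine care is the arithmetic core: combining $\mu\mid\lambda v\Rightarrow\mu\mid v$ (from $\gcd(\mu,\lambda)=1$) with the sign constraint $a_k\ge u$, and then routing the conclusion through indecomposability of minimal generators. Everything else is formal bookkeeping.
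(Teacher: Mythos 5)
Your proof is correct, but it takes a different route from the paper: for the minimal-generation statement the paper simply cites \cite[Lemma 9.8]{Rosales-Garcia-Sanchez}, and for $\lambda,\mu\ge 2$ it gives only the one-line remark that $\lambda\in S\setminus\{a_1,\dots,a_e\}$. You instead supply a self-contained argument, and its core --- writing $\mu a_k=\mu u+\lambda v$, using $\gcd(\lambda,\mu)=1$ to get $\mu\mid v$, and then converting $a_k=u+\lambda w$ into a weight-$\ge 2$ representation of $a_k$ that violates indecomposability of minimal generators --- is exactly the right mechanism; it also correctly handles the degenerate cases ($e=1$, $w=0$, and the implicit need that the $\mu a_i$ and $\lambda a'_j$ be pairwise distinct). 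Your treatment of $\lambda\ge2$ is in fact more careful than the paper's, since you rule out $\lambda=0$ via the $\gcd$ condition (which forces $\mu=1$ and hence $S'=\NN$, a contradiction), whereas the paper's remark only visibly excludes $\lambda=1$. What the citation buys the paper is brevity; what your argument buys is a complete elementary verification that could stand on its own without the reference.
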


\begin{proof} For the first statement, see \cite[Lemma 9.8]{Rosales-Garcia-Sanchez}. We have for example $\lambda  \ge 2$ because $\lambda \in S \setminus \{a_1,\dots, a_e\}$. \end{proof}

\begin{lemma}\label{single-additional} (See the proof of \cite[Proposition 9.9]{Rosales-Garcia-Sanchez}.) Assume we have $S$, $S'$, and $S''$ as in Definition~\ref{defn:glue}, and let  $\rho$ and $\rho'$ be minimal presentations of $S$ and $S'$, respectively. Assume that $S$ and $S'$ are complete intersections. Then $S''$ is minimally presented by $\rho \cup \rho'$ together with a single additional element $r$, with $|r| \in \lambda \mu \mathbb{N}$. \end{lemma}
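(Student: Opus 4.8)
The plan is to understand exactly how the new presentation element $r$ arises from the gluing construction, and then to track its "degree" $|r|$ through the additive surjections. First I would recall the setup: we have surjections $\varphi\colon \Free(Y_1,\dots,Y_e)\to S$, $\varphi'\colon\Free(Y'_1,\dots,Y'_{e'})\to S'$, and $\varphi''\colon\Free(Y_1,\dots,Y_e,Y'_1,\dots,Y'_{e'})\to S''$ sending $Y_i\mapsto \mu a_i$ and $Y'_j\mapsto \lambda a'_j$ (using Lemma~\ref{min-gens-of-gluing}, which tells us these are exactly the minimal generators). The kernel congruence of $\varphi''$ visibly contains the copies of $\rho$ and $\rho'$ (any relation among the $a_i$ lifts to one among the $\mu a_i$, and similarly for the primed side). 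The point is that these together do \emph{not} generate: $\varphi''$ has a relation "connecting" the two halves, coming from the fact that $\lambda\mu$ lies in both $\NN\mu a_1+\dots+\NN\mu a_e$ (because $\lambda\in S$, so $\lambda=\sum n_i a_i$, giving $\lambda\mu=\sum n_i(\mu a_i)$) and $\NN\lambda a'_1+\dots+\NN\lambda a'_{e'}$ (because $\mu\in S'$, so $\mu=\sum m_j a'_j$, giving $\lambda\mu=\sum m_j(\lambda a'_j)$).

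The key step is to define $r=(r_1,r_2)$ where $r_1=\sum n_i Y_i$ is the expression for $\lambda$ in terms of the generators of $S$ (pulled into $\Free(Y_1,\dots,Y_e,Y'_1,\dots)$) and $r_2=\sum m_j Y'_j$ is the expression for $\mu$ in terms of the generators of $S'$. Then $\varphi''(r_1)=\sum n_i\mu a_i=\mu\lambda$ and $\varphi''(r_2)=\sum m_j\lambda a'_j=\lambda\mu$, so $r$ is indeed in the kernel congruence, and $|r|=\lambda\mu\in\lambda\mu\NN$, which is the required membership. Then I would invoke (or reproduce, following \cite[proof of Proposition 9.9]{Rosales-Garcia-Sanchez}) the combinatorial fact that $\rho\cup\rho'\cup\{r\}$ generates the full kernel congruence of $\varphi''$: one shows that any relation $\varphi''(u)=\varphi''(v)$ can be reduced, modulo these, to the trivial one, using the gcd hypothesis $\gcd(\lambda,\mu)=1$ to separate the "$S$-part" from the "$S'$-part" of a monomial in $\Free(Y_1,\dots,Y'_{e'})$ — concretely, if $\varphi''(u)=s$ then the total contribution from the $Y_i$'s is a multiple of $\mu$ and the contribution from the $Y'_j$'s is a multiple of $\lambda$, and $\gcd(\lambda,\mu)=1$ forces both contributions individually to lie in $\lambda\mu\NN$, letting one "slide" generators across via $r$ until $u$ is in standard form, after which $\rho$ and $\rho'$ finish the job. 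Minimality (that $r$ is genuinely needed and that $\rho\cup\rho'\cup\{r\}$ is a \emph{minimal} presentation, i.e.\ $S''$ is a complete intersection) follows because $S$ and $S'$ are complete intersections, so $|\rho|=e-1$ and $|\rho'|=e'-1$, hence $|\rho\cup\rho'\cup\{r\}|=(e-1)+(e'-1)+1=(e+e')-1=\edim k[S'']-1$, the complete intersection bound.

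The main obstacle is the verification that $\rho\cup\rho'\cup\{r\}$ actually \emph{generates} the kernel congruence — this is the genuinely nontrivial combinatorial content, and it is precisely what is proved in \cite[Proposition 9.9]{Rosales-Garcia-Sanchez}; everything else (identifying $r$, computing $|r|$, the cardinality count) is bookkeeping. Since the statement of the lemma explicitly cites that proof, I would keep my argument brief on that point: I would state the separation-by-gcd principle, explain how it lets one rewrite an arbitrary kernel element using $r$, and refer to \cite{Rosales-Garcia-Sanchez} for the full reduction, spending the bulk of the written proof instead on the clean identification $|r|=\lambda\mu$ and its membership in $\lambda\mu\NN$, which is the part actually used downstream in Section~\ref{sec:ns}.
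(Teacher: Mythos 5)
Your proposal is correct and takes essentially the same route as the paper, which itself gives no argument beyond the citation to \cite[Proposition 9.9]{Rosales-Garcia-Sanchez}: your explicit identification of the extra relation $r=(\sum n_iY_i,\ \sum m_jY'_j)$ with $|r|=\lambda\mu$, plus the cardinality count $(e-1)+(e'-1)+1$ for minimality, is exactly the content being extracted from that reference. (One minor imprecision in your sketch of the generation step: what $\gcd(\lambda,\mu)=1$ forces into $\lambda\mu\ZZ$ is the \emph{difference} of the $Y_i$-contributions of $u$ and $v$, not each contribution individually — but that step is delegated to the citation in any case.)
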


\begin{theorem} \label{glue-theorem} \cite[Theorem 9.10]{Rosales-Garcia-Sanchez} A numerical semigroup other than $\mathbb{N}$ is a complete intersection if and only if it is a gluing of two complete intersection numerical semigroups. \end{theorem}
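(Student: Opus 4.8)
The plan is to prove the two implications of Theorem~\ref{glue-theorem} separately, the ``only if'' direction being the substantive one.

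\emph{Gluing of complete intersections $\Rightarrow$ complete intersection.} Suppose $S''$ is a gluing of complete intersection numerical semigroups $S$ and $S'$, with $\edim k[S]=e$ and $\edim k[S']=e'$. By Lemma~\ref{min-gens-of-gluing}, $\edim k[S'']=e+e'$. By Lemma~\ref{single-additional}, $S''$ admits a presentation with $(e-1)+(e'-1)+1=(e+e')-1$ elements, so, translating this presentation into binomials in the standard way, the defining (toric) ideal $I_{S''}\subseteq k[x_1,\dots,x_{e+e'}]$ is generated by $(e+e')-1$ elements. On the other hand $k[S'']=k[x_1,\dots,x_{e+e'}]/I_{S''}$ is a one-dimensional domain, so $\operatorname{ht} I_{S''}=(e+e')-1$, which equals the number of generators just exhibited. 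Since in a Cohen--Macaulay ring an ideal of height $c$ generated by $c$ elements is automatically generated by a regular sequence, $I_{S''}$ is a complete intersection ideal, i.e.\ $S''$ is a complete intersection. This direction is essentially mechanical once Lemma~\ref{single-additional} and the presentation--ideal dictionary are in hand.

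\emph{Complete intersection $\Rightarrow$ gluing of complete intersections.} Here I would induct on $e=\edim k[S]$. Since $S\ne\mathbb N$ we have $e\ge 2$, and every minimal generator is $\ge 2$ (otherwise $1\in S$ and $S=\mathbb N$). If $e=2$, write $S=\langle a_1,a_2\rangle$ with $\gcd(a_1,a_2)=1$; then $S$ is the gluing of $\mathbb N=\langle 1\rangle$ with itself, via $\lambda=a_1\in\mathbb N\setminus\{1\}$ and $\mu=a_2\in\mathbb N\setminus\{1\}$, and $\mathbb N$ is a complete intersection. Suppose now $e\ge 3$. Since $S$ is a complete intersection, $I_S$ is generated by a regular sequence of $e-1$ binomials $h_1,\dots,h_{e-1}$. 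The goal is to produce a partition $\{a_1,\dots,a_e\}=A_1\sqcup A_2$ into nonempty blocks; writing $d_j:=\gcd(A_j)$ and $S_j:=\langle\, a/d_j : a\in A_j\,\rangle$, one wants $\gcd(d_1,d_2)=1$, $d_2\in S_1$ with $d_2$ not a minimal generator of $S_1$, and symmetrically $d_1\in S_2$ not a minimal generator of $S_2$. This exhibits $S$ as the gluing of $S_1$ and $S_2$ (with $\mu=d_1$, $\lambda=d_2$), each of embedding dimension $|A_j|<e$. One then checks that $S_1$ and $S_2$ are themselves complete intersections: reading Lemma~\ref{single-additional} in reverse, the $e-1$ generators of $I_S$ split into the $(|A_1|-1)$ generators of the extension of $I_{S_1}$, the $(|A_2|-1)$ generators of $I_{S_2}$, and the single gluing binomial, and the height count of the first part, applied blockwise, forces each of those subsets of binomials to be a regular sequence; the inductive hypothesis then applies to $S_1$ and $S_2$ and finishes the proof.

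The main obstacle is the production of the gluing partition when $e\ge 3$. The operative principle is that being a complete intersection makes the number of defining binomials, $e-1$, as small as it can be --- it is always at least the height $e-1$ --- and this extremality should force $h_1,\dots,h_{e-1}$ to be reorganizable, after a change of binomial generating set, so that all but one of them involve only the variables from one block of a suitable partition of $\{x_1,\dots,x_e\}$, the remaining binomial having its two monomials supported in the two different blocks, i.e.\ having exactly the shape of a gluing relation. Identifying the correct partition and the change of generators that realizes it is the heart of the matter; I would follow Delorme's original treatment, which is the content of the sections of \cite{Rosales-Garcia-Sanchez} leading up to Theorem~\ref{glue-theorem}. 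The secondary claim --- that the complete-intersection property descends to each block $S_j$ --- is routine given the height computation of the first part.
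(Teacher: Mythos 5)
The paper gives no proof of this statement at all: it is quoted verbatim from \cite[Theorem 9.10]{Rosales-Garcia-Sanchez} (Delorme's theorem) and used as a black box, so there is no internal argument to compare yours against. Judged on its own terms, your sketch of the ``if'' direction is correct and standard: a gluing of semigroups of embedding dimensions $e$ and $e'$ has embedding dimension $e+e'$ (Lemma~\ref{min-gens-of-gluing}), its toric ideal is generated by $(e-1)+(e'-1)+1=(e+e')-1$ binomials (the presentation-counting fact behind Lemma~\ref{single-additional}, which for this count does not really need the complete intersection hypothesis), and since $k[S'']$ is a one-dimensional domain the height of $I_{S''}$ is also $(e+e')-1$, so in the Cohen--Macaulay polynomial ring those generators form a regular sequence. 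Your base case $e=2$ (writing $\langle a_1,a_2\rangle$ as a gluing of $\mathbb N$ with itself) is also fine.

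The ``only if'' direction, however, is not proved in your write-up: everything hinges on producing the partition $\{a_1,\dots,a_e\}=A_1\sqcup A_2$ with the required gcd and membership properties, and you explicitly defer that to Delorme. That is the entire content of the theorem --- the ``extremality should force the binomials to be reorganizable'' principle you articulate is the right intuition, but realizing it requires the combinatorial analysis of minimal presentations (catenary-type arguments on the graphs associated to elements of $S$, or Delorme's original linear-algebraic decomposition), and nothing in the present paper's lemma stock supplies it. There is also a mild circularity to watch in your final step: Lemma~\ref{single-additional} as stated assumes $S_1$ and $S_2$ are already complete intersections, so you cannot literally ``read it in reverse''; you must instead argue directly that a minimal presentation of $S$ restricts to presentations of the blocks, and then run the height count blockwise, before the inductive hypothesis can be invoked. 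In short: your outline is structurally sound and the easy half is essentially complete, but as a self-contained proof it has a genuine gap at exactly the point you flag --- which, to be fair, is the same gap the paper leaves by citing the result without proof.
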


\begin{lemma}\label{small-exceptions} Let  $m$ and $n$ be relatively prime integers $\ge 2$. Then $mn/2-m-n\ge -2$, and the only cases when $mn/2-m-n\le 0$ are when  either $2 \in \{m,n\}$ or $\{m,n\} \in\{ \{3,4\},\{3,5\}\}$.  \end{lemma}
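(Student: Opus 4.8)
The statement is elementary number theory, so the plan is to treat it as a finite check plus a monotonicity argument. Write $h(m,n) = mn/2 - m - n$. The plan is first to establish the lower bound $h(m,n) \ge -2$, and then to enumerate exactly the coprime pairs with $m,n \ge 2$ for which $h(m,n) \le 0$. By symmetry I may assume $m \le n$ throughout.

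For the lower bound: I would fix $m$ and view $h(m,n) = (m/2 - 1)n - m$ as a function of $n$. If $m \ge 3$ then the coefficient $m/2 - 1 \ge 1/2 > 0$, so $h$ is increasing in $n$; since $n \ge m \ge 3$, the minimum over this range is at $n = m$ (or the smallest coprime $n$), giving $h \ge h(3,4) = 6 - 7 = -1 > -2$ after noting $n=3$ is excluded by coprimality when $m=3$. If $m = 2$, then $h(2,n) = n - 2 - n = -2$ identically, which is where the bound $-2$ is attained (and is sharp). This disposes of the first assertion.

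For the classification of the pairs with $h(m,n) \le 0$: the case $m = 2$ gives $h = -2 \le 0$ always, accounting for ``$2 \in \{m,n\}$''. So assume $m \ge 3$, hence $n \ge 4$ (coprimality rules out $n = 3 = m$, and $n \ge m$). Then $h(m,n) = (m/2-1)n - m \ge (1/2)n - m$; this already forces, for $h \le 0$, that $n \le 2m/(m-2)$, and the right side is a decreasing function of $m$ equal to $6$ at $m=3$, $4$ at $m=4$, and strictly less than $4$ for $m \ge 5$ — but $n \ge 4$, contradiction — so $m \in \{3,4\}$. For $m = 4$ we need $n \le 4$, hence $n = 4$, but $\gcd(4,4) \ne 1$, so this case is empty. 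For $m = 3$ we need $n \le 6$ with $\gcd(3,n) = 1$ and $n \ge 4$, leaving $n \in \{4,5\}$; a direct check gives $h(3,4) = -1 \le 0$ and $h(3,5) = 15/2 - 8 = -1/2 \le 0$, while $n = 6$ is excluded. Collecting cases, $h(m,n) \le 0$ holds precisely when $2 \in \{m,n\}$ or $\{m,n\} \in \{\{3,4\},\{3,5\}\}$, as claimed.

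There is no real obstacle here; the only point requiring a little care is keeping the coprimality constraint active at the boundary values (it is exactly what kills $\{m,n\} = \{3,3\}$, $\{4,4\}$, $\{3,6\}$, and is what makes the list finite rather than an inequality region). I would simply organize the write-up as: (i) the identity $h(2,n) = -2$; (ii) monotonicity in $n$ for $m \ge 3$; (iii) the bound $n \le 2m/(m-2)$ forcing $m \in \{3,4\}$; (iv) the finite check for $m \in \{3,4\}$.
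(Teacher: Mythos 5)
Your proof is correct. The paper in fact gives no proof of this lemma at all -- it is stated as an elementary arithmetic fact and left to the reader -- so there is no argument to compare against; your finite check (the identity $h(2,n)=-2$, monotonicity in $n$ for $m\ge 3$, the bound $n\le 2m/(m-2)$ forcing $m\in\{3,4\}$, and the coprimality check at the boundary) is a complete and valid verification. The only expository nit is in the lower-bound step: for $m\ge 3$ you justify $h\ge -2$ by citing only the value $h(3,4)=-1$, whereas strictly one should note that the minimum over the smallest admissible $n$ must be taken for each $m\ge 3$ (e.g.\ via $h(m,n)\ge h(m,m)=m(m-4)/2\ge -3/2$, or simply by observing that your classification in steps (iii)--(iv) already shows $h>0$ for all $m\ge 3$ outside the two listed pairs, where $h\ge -1$).
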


\begin{lemma} \label{inductive-step} Let $S$ and $S'$ be complete intersection numerical semigroups. Assume that $S$ satisfies Condition (*) and is not equal to $\NN$, and either: $S'$ can be generated by 1 or 2 elements,  or $S'$ satisfies Condition (*). Then each gluing of $S$ and $S'$ satisfies Condition (*) as well. \end{lemma}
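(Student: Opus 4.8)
\emph{Proof proposal.} The plan is to compute the Frobenius number of a gluing $S''$ explicitly in terms of $F(S)$, $F(S')$ and the gluing data, and then to verify Condition (*) for each relation in a minimal presentation of $S''$ one type at a time. Write a gluing as $S''=\NN\mu a_1+\dots+\NN\mu a_e+\NN\lambda a'_1+\dots+\NN\lambda a'_{e'}$ with $\lambda\in S\setminus\{a_1,\dots,a_e\}$, $\mu\in S'\setminus\{a'_1,\dots,a'_{e'}\}$, $\gcd(\lambda,\mu)=1$. By Lemma~\ref{min-gens-of-gluing} this is a minimal generating set and $\lambda,\mu\ge 2$; by Theorem~\ref{glue-theorem}, $S''$ is a complete intersection, so $F(S'')=a(k[S''])$ and Condition (*) makes sense for it. By Lemma~\ref{single-additional} a minimal presentation of $S''$ is $\rho\cup\rho'$ together with one extra relation $r$ whose value is a nonzero multiple of $\lambda\mu$, say $|r|=c\lambda\mu$ with $c\ge 1$. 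Since in $S''$ the generator $Y_i$ maps to $\mu a_i$ and $Y'_j$ to $\lambda a'_j$, a relation of $\rho$ read in $S''$ has value $\mu|r|$, and a relation of $\rho'$ has value $\lambda|r'|$. Feeding the equation degrees $\{\mu|r|\}_{r\in\rho}\cup\{\lambda|r'|\}_{r'\in\rho'}\cup\{c\lambda\mu\}$ and the generator degrees $\{\mu a_i\}\cup\{\lambda a'_j\}$ into formula~\eqref{a(R)-eqn} and regrouping gives
\[F(S'')=\mu F(S)+\lambda F(S')+c\lambda\mu.\]

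Next I would record the elementary bounds that drive the estimates: $\lambda,\mu\ge 2$ and $c\ge 1$; $F(S')\ge -1$ for any numerical semigroup; and $F(S)\ge 1$, because $S$ satisfies (*) and $S\neq\NN$, hence $S$ has at least one defining relation, of positive degree $d$, so $F(S)>d/2>0$. Then I would check $F(S'')>|d|/2$ for every equation degree $|d|$ of $S''$. For $|d|=\mu|r|$ with $r\in\rho$: $F(S'')-\mu|r|/2=\mu\big(F(S)-|r|/2\big)+\lambda\big(F(S')+c\mu\big)$; the first summand is positive because $S$ satisfies (*), and the second is $\ge\lambda(-1+2)>0$. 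For the extra relation: $F(S'')-c\lambda\mu/2=\mu F(S)+\lambda F(S')+c\lambda\mu/2\ge\mu F(S)+\lambda(-1+c\mu/2)\ge\mu F(S)>0$, using $c\mu\ge 2$. This leaves the relations coming from $\rho'$, where the hypotheses on $S'$ enter.

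The only genuinely non-formal point — and the reason the hypothesis on $S'$ is stated as it is — is the $\rho'$ case. If $S'$ satisfies (*), then $\lambda\big(F(S')-|r'|/2\big)>0$ and the leftover $\mu F(S)+c\lambda\mu>0$, so $F(S'')>\lambda|r'|/2$. If $S'$ is generated by one element then $S'=\NN$, $\rho'=\emptyset$, and there is nothing to check. The remaining case is $S'$ minimally generated by exactly two elements $a'_1,a'_2$ (both $\ge 2$, $\gcd=1$): this is automatically a complete intersection but may \emph{fail} (*) (e.g. $\langle 2,3\rangle$), and here $\rho'=\{r'\}$ with $|r'|=a'_1a'_2$ and $F(S')=a'_1a'_2-a'_1-a'_2$, so by Lemma~\ref{small-exceptions} one has $F(S')-|r'|/2=a'_1a'_2/2-a'_1-a'_2\ge -2$; therefore $F(S'')-\lambda|r'|/2=\mu F(S)+\lambda\big(F(S')-|r'|/2\big)+c\lambda\mu\ge\mu F(S)+\lambda(c\mu-2)\ge\mu F(S)>0$, again using $c\mu\ge 2$. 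Having covered all equation degrees of $S''$, we conclude $S''$ satisfies Condition (*). I expect this last subcase, together with keeping the rescaling of relation values under the gluing (a factor $\mu$ on $\rho$, a factor $\lambda$ on $\rho'$) exactly straight, to be the only steps requiring any care; everything else is bookkeeping with the explicit formula for $F(S'')$ and the inequalities $\lambda,\mu\ge 2$.
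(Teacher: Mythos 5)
Your proof is correct and follows essentially the same route as the paper: the same formula $F(S'')=\mu F(S)+\lambda F(S')+d''$ with $d''\in\lambda\mu\NN$, the same case split over the three types of relation degrees $\{\mu|r|\}_{r\in\rho}$, $\{\lambda|r'|\}_{r'\in\rho'}$, and $d''$, and the same appeal to Lemma~\ref{small-exceptions} with the bound $a'_1a'_2/2-a'_1-a'_2\ge -2$ to handle the two-generated $S'$. Your explicit justification that $F(S)\ge 1$ and your explicit dismissal of the $S'=\NN$ case are slightly more careful than the paper's, but the argument is the same.
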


\begin{proof}
Assume all notation in Definition~\ref{defn:glue}. Let $\rho$ and $\rho'$ denote  minimal presentations of $S$ and $S'$ respectively, and let  $\{d_1, \dots, d_{e-1}\}=\{|r|\}_{r \in \rho}$ and $\{d'_1,\dots, d'_{e'-1}\}=\{|r'|\}_{r' \in \rho'}$. Formula~\eqref{a(R)-eqn} together with Condition (*) on $S$ say    \begin{equation} \label{star-on-S}
F(S)=\sum _{i=1}^{e-1} d_i -\sum_{i=1}^e {a_i} > d_j/2 \end{equation} for each $j \in \{1,\dots, e\}$. Let $\rho''$ denote a minimal presentation for $S''$. By  Lemma~\ref{single-additional}, $\{|r''|\}_{r'' \in \rho''}=\{\mu d_1,\dots, \mu d_e, \lambda d'_1, \dots, \lambda d'_e, d''\}$, where $d'' \in\lambda \mu \mathbb{N}$, and we have (using~\eqref{a(R)-eqn} and Lemma~\ref{min-gens-of-gluing})\begin{equation} F(S'')=d''+ \sum _{i=1}^{e-1} \mu  d_i +\sum _{i=1}^{e'-1} \lambda d'_i- \sum_{i=1}^{e} \mu {a_i} -\sum_{i=1}^{e'} \lambda {a'_i} =d''+\mu F(S)+\lambda F(S').\end{equation}
Since $F(S) \ge 1$ and $F(S') \ge -1$, we obtain $F(S'')-d''/2\ge d''/2+\mu -\lambda\ge \lambda \mu /2 +\mu -\lambda$, which is positive since $\mu \ge 2$.  Using \eqref{star-on-S} and~\eqref{star-on-S}, we have $F(S'')=d''+\mu F(S)+\lambda F(S')>d''+\mu d_j/2+\lambda F(S') >\mu d_j/2$ for each $j \in \{1, \dots, e-1\}$; and  if $S'$  satisfies Condition (*) then symmetrically $F(S'') > \lambda d'_j/2$ for each $j \in \{1,\dots, e'-1\}$. It remains to check that $F(S'') > \lambda d'_1/2$ whenever $e'=2$. In this case $d'_1=a'_1a'_2$ and the goal amounts to $\lambda (a'_1a'_2/2-a'_1-a'_2)+d''+\mu F(S)>0$. But indeed, $\lambda (a'_1a'_2/2-a'_1-a'_2) \ge -2\lambda$ (by Lemma~\ref{small-exceptions}), $d'' \ge 2 \lambda$, and $F(S)>0$.

\end{proof}

 \begin{proposition}\label{prop:ns} Let $S$ be a complete intersection numerical semigroup, minimally generated by $\{a_1, \dots, a_e\}$. Then $S$ satisfies Condition (*), unless $e=2$ and either $2 \in \{a_1,a_2\}$ or $\{a_1,a_2\} \in\{ \{3,4\},\{3,5\}\}$. \end{proposition}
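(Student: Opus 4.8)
The plan is to induct on the number of gluings needed to build $S$, using Theorem~\ref{glue-theorem} as the structural backbone and Lemma~\ref{inductive-step} as the inductive engine. First I would dispose of the base case $e \le 2$. For $e = 1$ we have $S = \NN$, which is excluded by hypothesis (or handled vacuously, since there are no relations). For $e = 2$, say $S = \NN a_1 + \NN a_2$ with $\gcd(a_1,a_2) = 1$, the ring $k[S]$ is a hypersurface with the single relation of degree $d_1 = a_1 a_2$, and $F(S) = a_1 a_2 - a_1 - a_2$ by the classical Sylvester formula (which agrees with~\eqref{a(R)-eqn}). So Condition (*) for $S$ says exactly $a_1 a_2 - a_1 - a_2 > a_1 a_2 / 2$, i.e. $a_1 a_2 / 2 - a_1 - a_2 > 0$; by Lemma~\ref{small-exceptions} this holds precisely unless $2 \in \{a_1, a_2\}$ or $\{a_1, a_2\} \in \{\{3,4\},\{3,5\}\}$. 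This matches the stated exceptions exactly.

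For the inductive step, suppose $e \ge 3$ and the result holds for all complete intersection numerical semigroups with fewer minimal generators. Since $S \ne \NN$, Theorem~\ref{glue-theorem} writes $S$ as a gluing of two complete intersection numerical semigroups $S_1$ and $S_2$, minimally generated by (say) $e_1$ and $e_2$ elements with $e_1 + e_2 = e$ and $e_1, e_2 \ge 1$. The goal is to arrange that at least one of $S_1, S_2$ — say $S_1$ — satisfies Condition (*) and is not $\NN$, and that $S_2$ is either generated by $1$ or $2$ elements or also satisfies Condition (*); then Lemma~\ref{inductive-step} finishes. The natural move is to apply the inductive hypothesis to $S_1$ and $S_2$ separately: each has strictly fewer than $e$ generators, so by induction each either satisfies Condition (*) or is one of the small exceptions ($e_i = 2$ with $2$ dividing a generator or $\{3,4\},\{3,5\}$), or is $\NN$. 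If both are exceptional, I must still produce the conclusion directly — but in that case both $S_1$ and $S_2$ have $e_i \le 2$, so $e \le 4$, and these finitely many configurations can be checked by hand (or, better, re-examined via the explicit $F(S'') = d'' + \mu F(S_1) + \lambda F(S_2)$ formula from the proof of Lemma~\ref{inductive-step}, noting $d'' \in \lambda\mu\NN$ with $\lambda,\mu \ge 2$ gives a large additive correction that overwhelms the small or slightly-negative $F(S_i)$). Also the case where one factor is $\NN$: if $S_1 = \NN$ then $S_1$ is generated by one element, and I would swap the roles so the non-$\NN$ factor plays the role of "$S$" in Lemma~\ref{inductive-step} — but I need that non-$\NN$ factor to satisfy Condition (*), which again follows from induction unless it is a small exception, handled as above.

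The main obstacle is the interaction between "which factor I designate as $S$ versus $S'$ in Lemma~\ref{inductive-step}" and "whether that designated $S$ satisfies Condition (*) and is not $\NN$." Lemma~\ref{inductive-step} is asymmetric: it requires one glued factor to satisfy Condition (*) outright (and not be $\NN$), while the other factor only needs to satisfy Condition (*) or be generated by $\le 2$ elements. So the delicate point is ensuring that after the gluing decomposition I can always steer at least one factor into the stronger hypothesis slot. When $e \ge 5$ both factors could have $\ge 3$ generators and induction makes both satisfy Condition (*), so either can be $S$ and we are fine; when $e = 3$ or $4$ one factor might be a $2$-generated exception, but then it can only sit in the weaker slot, forcing the other factor into the stronger slot — and if that other factor is also exceptional or $\NN$ we are in the small finite range and must argue by direct computation with the $F(S'')$ formula. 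I would organize the write-up to: (1) establish the $e \le 2$ base case, (2) run the generic inductive step via Lemma~\ref{inductive-step}, and (3) collect the finitely many low-$e$ degenerate gluings and verify them using $F(S'') = d'' + \mu F(S_1) + \lambda F(S_2)$ together with $d'' \ge \lambda\mu \ge 4$ and the bounds $F(S_i) \ge -2$ from Lemma~\ref{small-exceptions}.
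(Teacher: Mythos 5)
Your overall architecture matches the paper's: the $e=2$ base case via Lemma~\ref{small-exceptions}, induction through Theorem~\ref{glue-theorem} and Lemma~\ref{inductive-step}, and the recognition that the asymmetry of Lemma~\ref{inductive-step} forces a separate direct argument exactly when neither glued factor can occupy the ``strong'' slot. The gap lies in how you propose to dispose of those residual cases. They are not ``finitely many configurations [that] can be checked by hand'': the $e=2$ exceptions include every $\NN 2+\NN a$ with $a$ odd, and the gluing data $\lambda,\mu$ range over infinitely many values, so only your fallback via the formula $F(S'')=d''+\mu F(S_1)+\lambda F(S_2)$ is viable, and it must actually be executed.

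That computation is more delicate than ``$d''\ge\lambda\mu\ge 4$ overwhelms the small or slightly-negative $F(S_i)$.'' The bound Lemma~\ref{small-exceptions} supplies is $F(S_1)-d_1/2\ge -2$, not $F(S_1)\ge -2$, and in $F(S'')-\mu d_1/2=d''+\mu\bigl(F(S_1)-d_1/2\bigr)+\lambda F(S_2)$ the negative contribution is $-2\mu$, which scales with $\mu$; in the $e=3$ case one also has $\lambda F(\NN)=-\lambda$, so the estimate reduces to $\lambda\mu-2\mu-\lambda>0$, which \emph{fails} (equals $0$) at $\mu=2$, $\lambda=4$. That boundary configuration is excluded only by $\gcd(\lambda,\mu)=1$ --- exactly the point of the paper's $e=3$ argument, where the case $a=2$, $bm_1+cm_2=4$ is ruled out solely by $\gcd(a,bm_1+cm_2)=1$ after the inequality $bm_1+cm_2\le 2a/(a-1)\le 4$ has been squeezed to equality. (The paper reaches this via the explicit three-generator structure theorem rather than via gluing, but the arithmetic is the same.) Until you carry out this boundary analysis, the degenerate cases --- which contain the real content beyond Lemma~\ref{inductive-step} --- remain unproved.
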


\begin{proof}  First suppose $e=2$. Now  $d_1=a_1a_2$ and $F(S)=a_1a_2-a_1-a_2$, and we are done by Lemma~\ref{small-exceptions}. 

Now suppose $e=3$. Then, by \cite[Theorem 10.6]{Rosales-Garcia-Sanchez} (or alternatively Theorem~\ref{glue-theorem})  there exist relatively prime integers $m_1$ and $m_2$ greater than one, and nonnegative integers $a$, $b$, and $c$ such that $S=\NN am_1 + \NN am_2 +\NN( bm_1+cm_2 )$, and furthermore, $a \ge 2$, $b+c \ge 2$, and $\gcd(a,bm_1+cm_2)=1$. In this case, the minimal presentation $\rho$ of $S$ has $\{|r|\}_{r \in \rho}=\{am_1m_2, a(bm_1+cm_2)\}$, and \begin{equation} F(S)=a(m_1m_2-m_1-m_2)+(a-1)(bm_1+cm_2). \end{equation}
 First, notice that $F(S)-a(bm_1+cm_2)/2= a(m_1m_2-m_1-m_2)+(a/2-1)(bm_1+cm_2)$ is positive since $m_1m_2-m_1-m_2>0$ and $a \ge 2$. 

To finish the $e=3$ case, suppose (with the aim of a contradiction) $F(S) \le am_1m_2/2$, i.e.,
\begin{equation} \label{eqn:am1m2/2}
a(m_1m_2/2-m_1-m_2) \le (1-a)(bm_1+cm_2). 
\end{equation}

However, $a(m_1m_2/2-m_1-m_2)\ge -2a$ by Lemma~\ref{small-exceptions}; so we get $-2a \le (1-a)(bm_1+cm_2)$, and thus $bm_1+cm_2 \le 2a/(a-1)\le 4$. Since a strict inequality $bm_1+cm_2<4$ is impossible (as $b+c \ge 2$), we get $bm_1+cm_2 = 2a/(a-1)= 4$. So    $a=2$ and  $bm_1+cm_2=4$, contradicting  $\gcd(a,bm_1+cm_2)=1$.

Having finished cases $e=2$ and $e=3$, we proceed to the induction part, which requires little additional work in light of  Theorem~\ref{glue-theorem} and Lemma~\ref{inductive-step}. In fact the only thing left to check is that any gluing of two-generated numerical semigroups (not counting $\mathbb{N}$ as two-generated) satisfies Condition (*). So let $S=\NN a_1 + \NN a_2$, $S'=\NN a'_1 + \NN a'_2 $, $\lambda \in S \setminus \{a_1,  a_2\}$ and $\mu \in S' \setminus \{a'_1, a'_{2}\}$, such that $\gcd(\lambda, \mu)=1$, and let $S''=\NN \mu a_1 + \NN \mu a_2 +\NN \lambda a'_1 + \NN \lambda a'_2$. We wish to check that each of $\{ \mu a_1a_2,\lambda a'_1a'_2,d''\}$ is exceeded by $2 F(S'')$, where $d'' \ge \mu \lambda$ (recall Lemma~\ref{single-additional}) and $F(S'')=d''+\mu(a_1a_2-a_1-a_2)+\lambda(a'_1a'_2-a'_1-a'_2)=d''+F(S)+F(S')$. We have $F(S'')-d''/2>0$ since $F(S)$ and $F(S')$ are positive. Meanwhile, by Lemma~\ref{small-exceptions}, $F(S'')-\mu a_1a_2/2=d''+\mu(a_1a_2/2-a_1-a_2)+F(S') \ge d'' -2 \mu +F(S') \ge \lambda \mu -2 \mu +1>0$. The proof that $F(S'')-\lambda a'_1a'_2/2>0$ is of course the same.
\end{proof}

\def\cprime{$'$} \def\polhk#1{\setbox0=\hbox{#1}{\ooalign{\hidewidth
  \lower1.5ex\hbox{`}\hidewidth\crcr\unhbox0}}}
  \def\polhk#1{\setbox0=\hbox{#1}{\ooalign{\hidewidth
  \lower1.5ex\hbox{`}\hidewidth\crcr\unhbox0}}}
\providecommand{\bysame}{\leavevmode\hbox to3em{\hrulefill}\thinspace}
\providecommand{\MR}{\relax\ifhmode\unskip\space\fi MR }
\providecommand{\MRhref}[2]{%
  \href{http://www.ams.org/mathscinet-getitem?mr=#1}{#2}
}
\providecommand{\href}[2]{#2}

\end{document}